\documentclass{article}

\usepackage{mathtools}

\usepackage{amssymb}
\usepackage{amsthm}
\usepackage[justification=centering]{caption}
\usepackage{booktabs}
\usepackage{enumitem}
\usepackage{float}
\usepackage[margin=1.25in]{geometry}
\usepackage{gnuplottex}
\usepackage[below]{placeins}



\usepackage[
      backend=bibtex
    , style=numeric
    , maxbibnames=99
    , maxcitenames=99
    ]{biblatex}
\addbibresource{bibliography.bib}
\DeclareFieldFormat[misc]{title}{\mkbibquote{#1}}

\usepackage{tikz}
\usetikzlibrary{cd}
\usetikzlibrary{calc}
\usetikzlibrary{positioning}

\usepackage[ruled, procnumbered]{algorithm2e}
\SetKw{Continue}{continue}
\SetKw{In}{in}
\SetKw{None}{None}
\SetKw{Yield}{yield}
\SetKw{And}{and}
\SetKw{Break}{break}
\SetKwInput{Input}{Input}
\SetKwInput{Base}{Base case}
\SetKwProg{Fn}{fn}{}{}

\usepackage{listings}
\lstset{%
    numbers=left,
    escapechar=|,
    basicstyle=\footnotesize\ttfamily,
    breaklines=true,
}
\lstdefinelanguage{Rust}{%
    morekeywords={bool, const, fn, for, if, in, let, mut, u64, while},
}

\binoppenalty=10000
\relpenalty=10000

\newcommand{\precdot}{\prec\mathrel{\mkern-3mu}\mathrel{\cdot}}

\DeclareMathOperator{\rot}{rot}
\DeclareMathOperator{\ord}{ord}

\newtheorem{theorem}{Theorem}
\newtheorem{lemma}{Lemma}
\newtheorem{corollary}{Corollary}
\newtheorem{conjecture}{Conjecture}

\theoremstyle{definition}
\newtheorem{definition}{Definition}
\newtheorem{algorithm-thm}[algocf]{Algorithm}
\newtheorem{strategy}{Strategy}

\theoremstyle{remark}
\newtheorem{remark}{Remark}

\title{An almost linear time algorithm testing whether the Markoff graph modulo~$p$ is connected}
\author{Colby Austin Brown}
\date{}

\begin{document}
\maketitle

\begin{abstract}
    The Markoff graph modulo~$p$ is known to be connected for all but finitely many primes~$p$ (see~\citeauthor{fuchs} [arxiv:\citefield{fuchs}{eprint}]), and it is conjectured that these graphs are connected for all primes.
    In this paper, we provide an algorithmic realization of the process introduced by Bourgain, Gamburd, and Sarnak [arxiv:\citefield{bgs}{eprint}] to test whether the Markoff graph modulo~$p$ is connected for arbitrary primes.
    Our algorithm runs in~$o(p^{1 + \epsilon})$ time for every~$\epsilon > 0$.
    We demonstrate this algorithm by confirming that the Markoff graph modulo~$p$ is connected for all primes less than one million.
\end{abstract}

\section{Introduction}

The Markoff equation is
\begin{align}
    x^2 + y^2 + z^2 - xyz = 0. \label{eq:markoff}
\end{align}
It is often written as
\begin{align*}
    x^2 + y^2 + z^2 - 3xyz = 0,
\end{align*}
but the solutions to the latter are in bijection with the solutions to the former, reduced by a factor of~$3$.
In this paper, ``the Markoff equation'' will always refer to~\eqref{eq:markoff}.

If~$(a, b, c)$ is an integer solution to the Markoff equation, then it is called a Markoff triple.
Each of~$a$,~$b$, and~$c$ is a Markoff number; the set of all positive Markoff numbers is~$\mathbb{M}$.
The first few positive Markoff numbers are~$\mathbb{M} = \{3, 6, 15, 39, 87, \ldots\}$.
The set of Markoff triples is closed under taking the negation of exactly two coordinates; in this paper, we will only consider the nonnegative solutions, denoted~$\mathcal{M}$.

Fixing the values of~$y$ and~$z$, the Markoff equation is quadratic in~$x$.
The values of~$x$ which form a Markoff triple~$(x, y, z) \in \mathcal{M}$ are given by the quadratic equation
\begin{align}
        x = \frac{1}{2}\left(yz \pm \sqrt{y^2z^2 - 4(y^2 + z^2)}\right). \label{eq:quadratic}
\end{align}

These two solutions are mapped to each other via~$x \mapsto yz - x$.
The \textit{Markoff Vieta involutions} are this map applied to one coordinate of a Markoff triple:
\begin{align*}
    V_1 &\colon (a, b, c) \mapsto (bc - a, b, c), \\
    V_2 &\colon (a, b, c) \mapsto (a, ac - b, c), \\
    V_3 &\colon (a, b, c) \mapsto (a, b, ab - c).
\end{align*}

Let~$\Gamma$ be the group generated by the Markoff Vieta involutions and the permutations of~$x$,~$y$, and~$z$.
The group~$\Gamma$ has exactly one fixed point,~$(0, 0, 0)$.
Since the Markoff equation is invariant under permutations of~$x$,~$y$, and~$z$, the triples~$\mathcal{M}$ are fixed by~$\Gamma$.
By a result of Markoff, the subset~$\mathcal{M}^\times = \mathcal{M} \setminus \{(0, 0, 0)\}$ is also $\Gamma$-transitive~\cite{markoff}.
We represent this as a tree~$\mathcal{G}^\times$ with vertex set~$\mathcal{M}^\times$, rooted at $(3, 3, 3)$, and an edge between~$v_1$ and~$v_2$ labeled~$V_i$ if~$V_i(v_1) = v_2$.
(An equivalent construction has the edges directed, but since~$V_i$ is an involution, edges will always come in~$(v_1, v_2)$,~$(v_2, v_1)$ pairs.)
This tree is called the Markoff tree.
See Figure~\ref{fig:markoff-tree} for a depiction of~$\mathcal{G}^\times$.

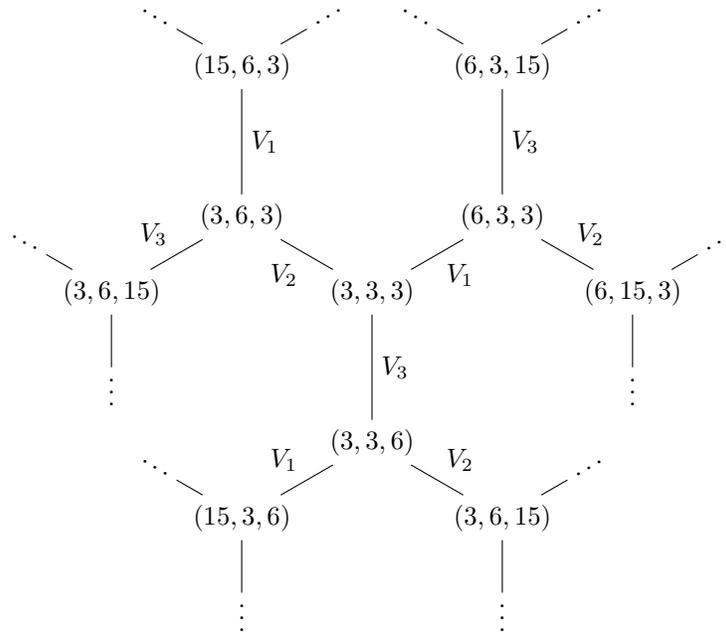
\begin{figure}
    \begin{center}
        \begin{tikzpicture}
            \draw node (root) {$(3, 3, 3)$};

            \draw ($(root) + (30:2)$) node (V1) {$(6, 3, 3)$};
            \draw ($(root) + (150:2)$) node (V2) {$(3, 6, 3)$};
            \draw ($(root) + (270:2)$) node (V3) {$(3, 3, 6)$};

            \draw (root) -- (V1) node [midway, below right] {$V_1$};
            \draw (root) -- (V2) node [midway, below left] {$V_2$};
            \draw (root) -- (V3) node [midway, right] {$V_3$};

            \draw ($(V1) + (-30:2)$) node (V12) {$(6, 15, 3)$};
            \draw ($(V1) + (90:2)$) node (V13) {$(6, 3, 15)$};
            \draw ($(V2) + (90:2)$) node (V21) {$(15, 6, 3)$};
            \draw ($(V2) + (210:2)$) node (V23) {$(3, 6, 15)$};
            \draw ($(V3) + (210:2)$) node (V31) {$(15, 3, 6)$};
            \draw ($(V3) + (-30:2)$) node (V32) {$(3, 6, 15)$};

            \draw (V1) -- (V12) node [midway, above right] {$V_2$};
            \draw (V1) -- (V13) node [midway, right] {$V_3$};
            \draw (V2) -- (V21) node [midway, right] {$V_1$};
            \draw (V2) -- (V23) node [midway, above left] {$V_3$};
            \draw (V3) -- (V31) node [midway, above left] {$V_1$};
            \draw (V3) -- (V32) node [midway, above right] {$V_2$};

            \begin{scope}[anchor=west]
                \draw (V12) -- ($(V12) + (30:1)$) node[rotate=30] {$\dots$};
                \draw (V12) -- ($(V12) + (270:1)$) node[rotate=270] {$\dots$};
                \draw (V13) -- ($(V13) + (30:1)$) node[rotate=30] {$\dots$};
                \draw (V13) -- ($(V13) + (150:1)$) node[rotate=150] {$\dots$};
                \draw (V21) -- ($(V21) + (30:1)$) node[rotate=30] {$\dots$};
                \draw (V21) -- ($(V21) + (150:1)$) node[rotate=150] {$\dots$};
                \draw (V23) -- ($(V23) + (150:1)$) node[rotate=150] {$\dots$};
                \draw (V23) -- ($(V23) + (270:1)$) node[rotate=270] {$\dots$};
                \draw (V31) -- ($(V31) + (150:1)$) node[rotate=150] {$\dots$};
                \draw (V31) -- ($(V31) + (270:1)$) node[rotate=270] {$\dots$};
                \draw (V32) -- ($(V32) + (270:1)$) node[rotate=270] {$\dots$};
                \draw (V32) -- ($(V32) + (30:1)$) node[rotate=30] {$\dots$};
            \end{scope}
        \end{tikzpicture}
    \end{center}
    \caption{%
        The Markoff tree~$\mathcal{G^\times}$.%
        \label{fig:markoff-tree}
    }
\end{figure}%

By the \textit{Markoff graph} we mean the graph~$\mathcal{G} = \mathcal{G}^\times \cup \{(0, 0, 0)\}$ where the trivial solution is an isolated node.

We may also consider the Markoff equation modulo a prime~$p > 2$,
\begin{align*}
    x^2 + y^2 + z^2 - xyz \equiv 0 \bmod p. \label{eq:markoff-mod-p}
\end{align*}

The solutions are denoted $\mathcal{M}_p$.
Define~$\mathbb{M}_p$ to be the integers~$x \in \mathbb{Z} / p\mathbb{Z}$ which appear in some triple~$(x, y, z) \in \mathcal{M}_p$, just like we did before.
Lemma~\ref{lem:mp} provides the following characterization:
\[ \mathbb{M}_p =
    \begin{cases}
        \mathbb{Z} / p\mathbb{Z} & -1 \text{\ is a quadratic nonresidue modulo } p,\\
        \mathbb{Z} / p\mathbb{Z} \setminus \{-2, 2\} & \text{otherwise}.
    \end{cases}
\]

\begin{lemma}\label{lem:mp}
    Let~$z \in \mathbb{Z} / p\mathbb{Z}$. 
    Then there are~$x, y \in \mathbb{Z} / p\mathbb{Z}$ such that
    \[ x^2 + y^2 + z^2 - xyz \equiv 0 \bmod p. \]
    if and only if either $z \neq 2$ or $-1$ is a quadratic residue modulo~$p$.
\end{lemma}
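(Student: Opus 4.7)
The plan is to fix~$z$ and view~\eqref{eq:markoff-mod-p} as a quadratic in~$x$.  By~\eqref{eq:quadratic}, for a given~$y$ there exists an~$x \in \mathbb{Z}/p\mathbb{Z}$ completing a solution if and only if the discriminant
\[ D(y) := y^2 z^2 - 4(y^2 + z^2) = (z^2 - 4)\, y^2 - 4 z^2 \]
is a square in~$\mathbb{Z}/p\mathbb{Z}$ (with~$0$ counted as a square).  The lemma thus reduces to: does some~$y$ make~$D(y)$ a square?

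I would then split on whether~$z^2 - 4$ vanishes.  When~$z^2 = 4$ (i.e.,~$z = \pm 2$), $D(y) \equiv -16$ is constant in~$y$; since~$16 = 4^2$ is already a square, $-16$ is a square iff~$-1$ is a quadratic residue mod~$p$.  This pins down the exceptional case and yields the ``only if'' direction.

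When~$z^2 \neq 4$, I would show a solution always exists.  Writing~$D(y) = t^2$ gives the affine conic
\[ (z^2 - 4)\, y^2 - t^2 = 4 z^2 \]
in~$(y, t)$.  If~$z = 0$, the Markoff equation is satisfied by~$(x, y) = (0, 0)$ directly.  If~$z \neq 0$, the left-hand side is a non-degenerate binary quadratic form and the right-hand side is nonzero, so the standard point-count for affine conics over~$\mathbb{Z}/p\mathbb{Z}$---equivalently, evaluating the character sum $\sum_{y}\bigl(1 + \bigl(\tfrac{D(y)}{p}\bigr)\bigr)$ via Legendre symbol identities---shows there are at least~$p - 1 \ge 2$ solutions, and in particular some~$y$ works.

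The main technical step is the point-count on the conic in the generic case; it can be carried out either by appealing to the classification of non-degenerate binary quadratic forms over~$\mathbb{Z}/p\mathbb{Z}$, or by a direct Legendre-symbol calculation factoring~$(z^2 - 4)$ into cases by quadratic residuosity.  Everything else is bookkeeping, so I expect this count (and the bookkeeping to collate it with the~$z = 0$ and~$z = \pm 2$ special cases) to be the only real obstacle.
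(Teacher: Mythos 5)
Your argument is correct. You and the paper handle $z = \pm 2$ identically: the discriminant is $-16$, a square iff $-1$ is a quadratic residue. For $z^2 \neq 4$ you take a genuinely different route. You interpret $D(y) = t^2$ as the affine conic $(z^2-4)\,y^2 - t^2 = 4z^2$ and invoke the standard point-count for a nondegenerate binary quadratic form over $\mathbb{F}_p$ (at least $p - 1$ points), so a valid $y$ exists. The paper instead argues by a pigeonhole on the image of $y \mapsto D(y)$: as an affine function of $y^2$, this map has image of size $(p+1)/2$, and since there are only $(p-1)/2$ quadratic non-residues, $D$ must take a square value somewhere. Your route needs the extra case $z = 0$ (where $4z^2 = 0$ degenerates the conic), which you correctly carve off by noting $(x, y) = (0, 0)$ already solves the equation; the paper's pigeonhole absorbs $z = 0$ without a separate case. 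Both arguments are elementary, the paper's slightly more so since it avoids any appeal to the classification or point-count of conics.
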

\begin{proof}
    The solutions~$(x, y, z) \in \mathcal{M}_p$ are given by the quadratic equation~\eqref{eq:quadratic}.
    If~$z = \pm 2$, then the discriminant of~\eqref{eq:quadratic} is~$-16$, which is in~$\mathbb{Z} / p\mathbb{Z}$ exactly when~$-1$ is a quadratic residue modulo~$p$.

    Now, fix~$z \neq \pm 2$.
    The map
    \[ y \mapsto y^2z^2 - 4(y^2 + z^2)\] 
    is 2-to-1 from~$\mathbb{Z} / p\mathbb{Z} \setminus \{0\}$ to~$\mathbb{Z} / p\mathbb{Z}$.
    There are~$(p + 1) / 2$ integers in the image of the map.
    But since there are only~$(p - 1) / 2$ quadratic non-residues, there must be a~$y$ where the discriminant of~\eqref{eq:quadratic} is a quadratic residue giving a~$x$ value for which~$(x, y, z) \in \mathcal{M}_p$.
\end{proof}

As before,~$\mathcal{M}_p$ is~$\Gamma$-fixed, and the only fixed point is~$(0, 0, 0)$.
We build a graph $\mathcal{G}_p^\times$ analogous to the Markoff tree with vertex set~$\mathcal{M}_p^\times$ and edges given by the Markoff Vieta involutions.
See Figure~\ref{fig:markoff-graph} for~$\mathcal{G}_5^\times$ as an example.

\begin{figure}
    \begin{center}
        \begin{tikzpicture}
            \def\d{1.25}
            \draw node (333) {$333$};

            \node (133) at ($(333) + (30:\d)$) {$133$};
            \node (130) at ($(133) + (-30:\d)$) {$130$};
            \node (120) at ($(130) + (30:\d)$) {$120$};
            \node (122) at ($(120) + (90:\d) + (-0.2,0)$) {$122$};
            \node (102) at ($(122) + (150:\d)$) {$102$};
            \node (103) at ($(102) + (210:\d)$) {$103$};
            \draw (333) -- (133);
			\draw (133) -- (130);
			\draw (130) -- (120);
			\draw (120) -- (122);
			\draw (122) -- (102);
			\draw (102) -- (103);
            \draw (103) -- (133);

            \node (430) at ($(130) + (300:\d)$) {$430$};
            \node (420) at ($(120) + (300:\d)$) {$420$};
			\draw (130) -- (430);
			\draw (120) -- (420);
            \draw (430) -- (420);

            \node (403) at ($(103) + (120:\d)$) {$403$};
            \node (402) at ($(102) + (120:\d)$) {$402$};
			\draw (103) -- (403);
			\draw (102) -- (402);
            \draw (403) -- (402);

            \draw ($(102) + (30:\d)$) node (423) {$423$};
            \draw ($(120) + (30:\d)$) node (432) {$432$};

            \node (322) at ($(122) + (210:\d)$) {$322$};
			\draw (122) -- (322);

            \draw (403) -- (423);
            \draw (430) -- (432);
            \draw (420) -- (423);
            \draw (432) -- (402);

            \node (331) at ($(333) + (150:\d)$) {$331$};
            \node (301) at ($(331) + (90:\d)$) {$301$};
            \node (201) at ($(301) + (150:\d)$) {$201$};
            \node (221) at ($(201) + (210:\d) + (0.2,0)$) {$221$};
            \node (021) at ($(221) + (270:\d)$) {$021$};
            \node (031) at ($(021) + (330:\d) + (-0.1,0)$) {$031$};
			\draw (333) -- (331);
			\draw (331) -- (301);
			\draw (301) -- (201);
			\draw (201) -- (221);
			\draw (221) -- (021);
			\draw (021) -- (031);
            \draw (031) -- (331);

            \node (304) at ($(301) + (420:\d)$) {$304$};
            \node (204) at ($(201) + (420:\d)$) {$204$};
			\draw (301) -- (304);
			\draw (201) -- (204);
            \draw (304) -- (204);

            \node (034) at ($(031) + (240:\d)$) {$034$};
            \node (024) at ($(021) + (240:\d)$) {$024$};
			\draw (031) -- (034);
			\draw (021) -- (024);
            \draw (034) -- (024);

            \draw ($(021) + (150:\d)$) node (234) {$234$};
            \draw ($(201) + (150:\d)$) node (324) {$324$};

            \node (223) at ($(221) + (330:\d)$) {$223$};
			\draw (221) -- (223);

            \draw (034) -- (234);
            \draw (304) -- (324);
            \draw (204) -- (234);
            \draw (324) -- (024);

            \node (313) at ($(333) + (270:\d)$) {$313$};
            \node (013) at ($(313) + (210:\d)$) {$013$};
            \node (012) at ($(013) + (270:\d)$) {$012$};
            \node (212) at ($(012) + (330:\d)$) {$212$};
            \node (210) at ($(212) + (390:\d)$) {$210$};
            \node (310) at ($(210) + (450:\d)$) {$310$};
			\draw (333) -- (313);
			\draw (313) -- (013);
			\draw (013) -- (012);
			\draw (012) -- (212);
			\draw (212) -- (210);
			\draw (210) -- (310);
            \draw (310) -- (313);

            \node (043) at ($(013) + (540:\d)$) {$043$};
            \node (042) at ($(012) + (540:\d)$) {$042$};
			\draw (013) -- (043);
			\draw (012) -- (042);
            \draw (043) -- (042);

            \node (340) at ($(310) + (360:\d)$) {$340$};
            \node (240) at ($(210) + (360:\d)$) {$240$};
			\draw (310) -- (340);
			\draw (210) -- (240);
            \draw (340) -- (240);

            \draw ($(210) + (270:\d)$) node (342) {$342$};
            \draw ($(012) + (270:\d)$) node (243) {$243$};

            \node (232) at ($(212) + (450:\d)$) {$232$};
			\draw (212) -- (232);

            \draw (340) -- (342);
            \draw (043) -- (243);
            \draw (042) -- (342);
            \draw (243) -- (240);

            \draw[dashed] (322) -- (324);
            \draw[dashed] (322) -- (342);
            \draw[dashed] (232) -- (432);
            \draw[dashed] (232) -- (234);
            \draw[dashed] (223) -- (423);
            \draw[dashed] (223) -- (243);
        \end{tikzpicture}
    \end{center}
    \caption{%
        The Markoff graph~$\mathcal{G}^\times_5$.%
        \label{fig:markoff-graph}
    }
\end{figure}
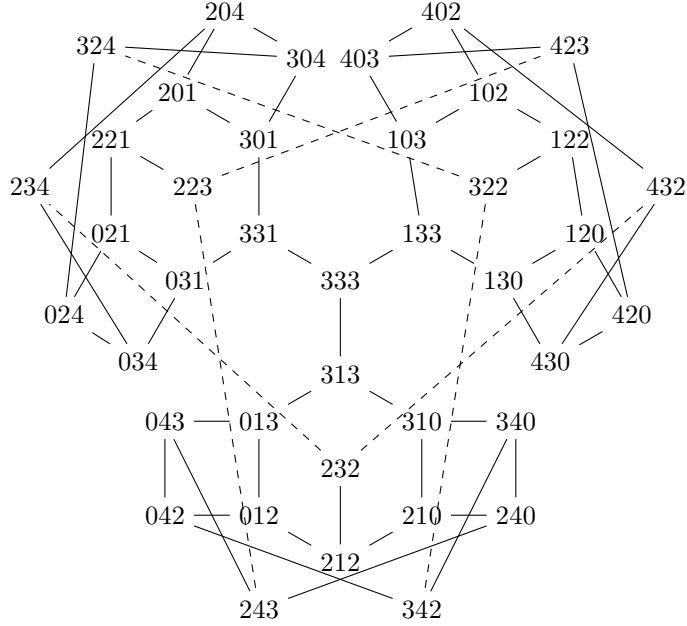%

The following conjecture was first made by~\citeauthor{strong-approx}~\cite{strong-approx}.

\begin{conjecture}\label{con:graphs-connected}
    For every prime~$p$, the Markoff graph~$\mathcal{G}^\times_p$ is connected and equivalent to~$\mathcal{G}^\times$ with triples congruent modulo~$p$ identified.
\end{conjecture}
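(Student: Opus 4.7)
The plan is to follow the strategy of Bourgain--Gamburd--Sarnak: reduce connectivity of~$\mathcal{G}_p^\times$ to connectivity among structured subsets called \emph{cages}, and then rule out exceptional orbits. Once~$\mathcal{G}_p^\times$ is connected, the ``equivalent to~$\mathcal{G}^\times$ with triples identified'' part is automatic, since the reduction modulo~$p$ of the connected tree~$\mathcal{G}^\times$ is a connected subgraph of~$\mathcal{G}_p^\times$ containing~$(3, 3, 3) \bmod p$. For each admissible~$c \in \mathbb{M}_p$, define the cage
\[
    \mathcal{C}_c = \{(a, b, c) \in \mathcal{M}_p^\times\}.
\]
Up to a bounded set of degenerate~$c$, this is the~$\mathbb{F}_p$-points of the conic~$a^2 + b^2 - c\, ab + c^2 \equiv 0 \pmod{p}$, and the composition~$\rot_c = V_1 \circ V_2$ preserves~$\mathcal{C}_c$. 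The first step is to diagonalize~$\rot_c$ over~$\mathbb{F}_p$ or~$\mathbb{F}_{p^2}$ (according to whether~$c^2 - 4$ is a quadratic residue modulo~$p$) and compute~$\ord(\rot_c) \in \{p - 1, p + 1\}$, matching~$|\mathcal{C}_c|$; this shows~$\rot_c$ acts transitively on~$\mathcal{C}_c$, giving connectivity within each cage.

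The second step is to connect the cages to each other. The involution~$V_3$ sends~$(a, b, c) \in \mathcal{C}_c$ to~$(a, b, ab - c) \in \mathcal{C}_{ab - c}$, so alternating~$\rot$ with~$V_3$ walks between cages; together with the permutation generators of~$\Gamma$ this yields, via character-sum estimates on the resulting random walk, a \emph{giant component}~$\mathcal{G}_\infty \subseteq \mathcal{G}_p^\times$ of size~$|\mathcal{M}_p^\times| - O(p^{1 - \delta})$ for some absolute~$\delta > 0$, and one checks that~$(3, 3, 3) \bmod p$ lies in~$\mathcal{G}_\infty$.

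The hard part will be ruling out the remaining small components. Since~$\rot_c$ is transitive on each~$\mathcal{C}_c$, every~$\Gamma$-orbit is a union of full cages, and an exceptional component corresponds to a subset~$S \subseteq \mathbb{M}_p$ with~$|S| = o(p)$ such that for every~$(a, b)$ the set~$\{c \in S : (a, b, c) \in \mathcal{M}_p^\times\}$ is closed under~$c \mapsto ab - c$. Ruling out such structured~$S$ in full generality appears to require genuinely new arithmetic input, plausibly a congruence subgroup property for the subgroup of~$\Gamma$ generated by the Vieta involutions, or a Galois-theoretic analysis of the Markoff character variety; without it, the best unconditional results apply only to primes outside a density-zero exceptional set, and it is precisely this obstacle that motivates the algorithmic, prime-by-prime approach taken in the remainder of this paper.
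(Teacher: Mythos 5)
This statement is labeled a \emph{conjecture} in the paper, not a theorem; the paper offers no proof of it and in fact attributes it to earlier authors and devotes the entire work to verifying it computationally for $p < 10^6$. So there is no ``paper's own proof'' to compare against, and your proposal should be read as a sketch of the Bourgain--Gamburd--Sarnak program rather than as a candidate proof. You correctly sense this yourself: your final paragraph concedes that ruling out small exceptional components requires input that is not currently available unconditionally, which is exactly why the conjecture remains open and why the paper pursues an algorithmic, prime-by-prime verification instead.

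That said, there is a concrete mathematical error in the first step of your sketch. You claim that $\ord(\rot_c) \in \{p-1, p+1\}$, that this matches $|\mathcal{C}_c|$, and hence that $\rot_c$ acts transitively on the cage $\mathcal{C}_c$. This is false in general. Writing $c = \chi + \chi^{-1}$ with $\chi \in \mathbb{F}_{p^2}^\times$, the order of the rotation map acting on $\mathcal{C}_c$ equals the multiplicative order $|\chi|$, which merely \emph{divides} $p - 1$ or $p + 1$ (this is Lemma~\ref{lem:order-divides}). When $|\chi|$ is a proper divisor, $\mathcal{C}_c$ decomposes into $(p \pm 1)/|\chi|$ distinct $\rot_c$-orbits rather than a single one. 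These short orbits attached to coordinates of small order are precisely the source of all the difficulty: they are the triples the paper calls \emph{bad}, and the entire middle game, endgame, and counting argument exist to handle them. Your later claim that ``every $\Gamma$-orbit is a union of full cages'' inherits the same error. The structured exceptional sets you try to rule out in the final paragraph are not the only obstruction; even connecting up the many small orbits inside a single cage is nontrivial and is where Lemmas~\ref{lem:endgame} and~\ref{lem:middle-game} and the Corvaja--Zannier-type bounds enter.
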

\begin{conjecture}\label{con:triples-onto}
    For every prime~$p$, the canonical projection~$\mathcal{M} \to \mathcal{M}_p$ is onto.
\end{conjecture}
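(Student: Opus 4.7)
The plan is to derive Conjecture~\ref{con:triples-onto} as a formal consequence of Conjecture~\ref{con:graphs-connected}. The starting point is that the Markoff tree $\mathcal{G}^\times$ is connected: every integer Markoff triple is reachable from $(3,3,3)$ by a finite sequence of Vieta involutions, so $\mathcal{M}^\times$ is the Vieta-orbit of $(3,3,3)$. Since each Vieta involution is a polynomial map with integer coefficients, the canonical projection $\pi\colon \mathcal{M} \to \mathcal{M}_p$ is Vieta-equivariant: $\pi(V_i(v)) = V_i(\pi(v))$ for every $v \in \mathcal{M}$ and every $i \in \{1,2,3\}$.

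Combining these two facts, $\pi(\mathcal{M}^\times)$ equals the Vieta-orbit of $(3,3,3) \bmod p$ inside $\mathcal{M}^\times_p$, which by the definition of $\mathcal{G}^\times_p$ is exactly the connected component of $(3,3,3) \bmod p$. Assuming Conjecture~\ref{con:graphs-connected}, $\mathcal{G}^\times_p$ has a single connected component, which is therefore all of $\mathcal{M}^\times_p$, giving $\pi(\mathcal{M}^\times) = \mathcal{M}^\times_p$. Adjoining $\pi((0,0,0)) = (0,0,0) \bmod p$ yields $\pi(\mathcal{M}) = \mathcal{M}_p$, which is precisely Conjecture~\ref{con:triples-onto}.

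The essential obstacle is thus Conjecture~\ref{con:graphs-connected} itself, the central open problem referenced in the introduction. It is known for all but finitely many primes by the theorem of Bourgain--Gamburd--Sarnak and subsequent refinements, but no unconditional proof covering every $p$ is currently available. Rather than attempting to close that gap directly, the remainder of the paper develops an efficient algorithm that verifies connectedness for any prescribed prime, so that Conjecture~\ref{con:triples-onto} can be confirmed empirically on any finite range of $p$ (in particular, for every $p < 10^6$).
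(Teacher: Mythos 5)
You correctly identify that this is an open conjecture with no unconditional proof, and your conditional derivation from Conjecture~\ref{con:graphs-connected}—using Vieta-equivariance of the reduction map and the connectivity of the Markoff tree—is essentially the forward implication of the paper's equivalence lemma stated immediately after the two conjectures. Your framing via orbits rather than explicit paths is a superficial difference; the paper additionally proves the converse implication (ontoness implies connectivity) to establish full equivalence, and, as you note, its remaining content is the algorithmic verification for all primes below one million.
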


\begin{lemma}
    Conjectures~\ref{con:graphs-connected} and~\ref{con:triples-onto} are equivalent.
\end{lemma}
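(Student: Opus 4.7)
The plan rests on the fact that the Vieta involutions are polynomial maps, so the canonical projection $\pi\colon \mathcal{M} \to \mathcal{M}_p$ intertwines them: $V_i \circ \pi = \pi \circ V_i$ for each $i \in \{1, 2, 3\}$. Thus $\pi$ descends to a label-preserving graph homomorphism $\mathcal{G} \to \mathcal{G}_p$, which is the main tool in both directions. I would also record the set-theoretic observation that the set $S$ of $w \in \mathcal{M}^\times$ with $\pi(w) = (0,0,0) \in \mathcal{M}_p$ is closed under every Vieta involution, since applying any $V_i$ to a triple with all coordinates divisible by $p$ yields another such triple, and $V_i$ restricts to an involution on $\mathcal{M}^\times$; by connectedness of $\mathcal{G}^\times$, $S$ is therefore either empty or all of $\mathcal{M}^\times$.

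For Conjecture~\ref{con:triples-onto} $\Rightarrow$ Conjecture~\ref{con:graphs-connected}, assume $\pi$ is onto. Every $v \in \mathcal{M}^\times_p$ admits a lift to some $w \in \mathcal{M}^\times$, since any lift in $\mathcal{M}$ must be nonzero. To prove $\mathcal{G}^\times_p$ is connected, take $v_1, v_2 \in \mathcal{M}^\times_p$, lift to $w_1, w_2 \in \mathcal{M}^\times$, join them by a path in $\mathcal{G}^\times$ via Markoff's theorem, and project. Since $w_1 \notin S$, we have $S = \emptyset$, so no vertex on the projected walk reduces to $(0,0,0)$, and the walk stays in $\mathcal{G}^\times_p$ and connects $v_1$ to $v_2$. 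For the ``equivalent'' clause I would verify that $\pi$ induces a labeled-graph isomorphism $\mathcal{G}^\times / {\sim_p} \to \mathcal{G}^\times_p$, where $\sim_p$ denotes congruence modulo $p$: bijectivity on vertices is the surjectivity above, and edge compatibility follows from $V_i \circ \pi = \pi \circ V_i$ in one direction, and from the observation that any Vieta edge $(v, V_i(v))$ in $\mathcal{G}^\times_p$ lifts by applying $V_i$ to any preimage of $v$ in the other.

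For the converse, the ``equivalent'' clause of Conjecture~\ref{con:graphs-connected} says that the vertex set $\pi(\mathcal{M}^\times)$ of $\mathcal{G}^\times / {\sim_p}$ coincides with $\mathcal{M}^\times_p$; combined with $\pi((0,0,0)) = (0,0,0)$ this yields $\pi(\mathcal{M}) = \mathcal{M}_p$. The main obstacle throughout is the path-lifting step in the forward direction, i.e., showing the projected walk avoids the isolated vertex $(0,0,0) \in \mathcal{M}_p$; the Vieta-closure of $S$ reduces this to a single endpoint check via the connectedness of the integer Markoff graph.
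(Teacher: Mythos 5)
Your argument is essentially the same lift-and-project scheme the paper uses (connectedness of $\mathcal{G}^\times_p$ plus commutation of the Vieta maps with reduction gives surjectivity of $\pi$; surjectivity plus connectedness of $\mathcal{G}^\times$ plus projection gives connectedness of $\mathcal{G}^\times_p$), but you add a genuinely useful ingredient. The paper, in the ``onto $\Rightarrow$ connected'' direction, projects the integer path $S$ from $(3,3,3)$ to $w$ and asserts without comment that the result is a walk in $\mathcal{G}^\times_p$. This silently requires that no intermediate vertex of $S$ reduce to the isolated point $(0,0,0) \bmod p$, which the paper does not justify. Your observation that the fibre $S = \pi^{-1}(0,0,0) \cap \mathcal{M}^\times$ is closed under every $V_i$ — hence, by connectedness of $\mathcal{G}^\times$, is empty or all of $\mathcal{M}^\times$, and is pinned down by checking a single vertex — closes that gap cleanly and elementarily, avoiding any appeal to the classification of Markoff--Hurwitz equations. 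You also make explicit the verification of the ``equivalent'' clause of Conjecture~\ref{con:graphs-connected} (graph isomorphism, not just connectedness), which the paper leaves implicit. The only cosmetic difference in the converse is that you invoke the ``equivalent'' clause of Conjecture~\ref{con:graphs-connected} to get surjectivity directly, whereas the paper derives surjectivity from connectedness alone by running the Vieta sequence of a path from $(3,3,3)$ to $v$ over $\mathbb{Z}$; both are valid, but the paper's version shows that connectedness alone already implies Conjecture~\ref{con:triples-onto}.
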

\begin{proof}
    If~$\mathcal{G}^\times_p$ is connected, then for any~$v \in \mathcal{M}_p$, there is a sequence~$V_{i_1},\dots V_{i_n}$ forming path from~$(3, 3, 3)$ to~$v$.
    Taking the vertices in this path modulo $p$,
    \[ (V_{i_1} \circ \cdots \circ V_{i_n})(3, 3, 3) \equiv v \bmod p. \]

    Conversely, assume~$\mathcal{M} \to \mathcal{M}_p$ is onto.
    For~$v \in \mathcal{M}_p$, let~$w \in \mathcal{M}$ such that~$w \equiv v \bmod p$.
    Since~$\mathcal{G}^\times$ is connected, there is a path~$S$ from~$(3, 3, 3)$ to~$w$.
    Now,~$S$ also forms a walk in~$\mathcal{G}^\times_p$, connecting~$(3, 3, 3)$ to~$v$.
    And since every vertex is connected to~$(3, 3, 3)$, they are all connected.
\end{proof}
Conjecture~\ref{con:triples-onto} is often called ``the strong approximation conjecture''.

\citeauthor{bgs} developed a three-tiered approach in~\cite{bgs} showing that strong approximation holds for all primes $p$ outside a zero density subset.
This result was strengthened by \citeauthor{chen} in~\cite{chen} to strong approximation holding for all but a finite set of primes.
\citeauthor{fuchs} proved an upper bound of approximately $3.448 \cdot 10^{392}$ on $p$ for which~$\mathcal{G}_p^\times$ may be disconnected~\cite{fuchs}.
Their method also proves that~$\mathcal{G}_p^\times$ is connected for many primes~beginning with~$p = 1,327,363$, but is inconclusive for most primes less than~$10^9$.

The strong approximation conjecture implies that solutions to the Markoff equation modulo a prime lift to Markoff triples over~$\mathbb{Z}$.
\citeauthor{bgs} use the lifting property to show that almost all Markoff numbers are highly composite~\cite[Theorem 18]{bgs}.
The strong approximation conjecture is the first step towards proving the much larger conjecture that~$\mathcal{G}_p^\times$ forms an expander family, first proposed in~\cite{bgs}.

In the course of studying the spectral gaps on Markoff graphs in~\cite{experiments}, \citeauthor{experiments} performed computations showing that~$\mathcal{G}_p^\times$ is connected for all primes~$p < 3,000$.
However, their method requires calculating the adjacency matrix of~$\mathcal{G}_p^\times$, which has on the order of~$O(p^4)$ entries, and is therefore infeasible for determining connectivity for larger values of~$p$.
Our work aims to fill in the gap between~$p = 3,001$ and~$p = 1,327,363$, and provide affirmative answers on the set of primes for which the results of~\cite{fuchs} are inconclusive.

In this paper, we outline an algorithmic realization of the process introduced by~\citeauthor{bgs}, along with an implementation of that algorithm in Rust.
Our algorithm runs in~$o(p^{1 + \epsilon})$ time for every~$\epsilon > 0$, and the real-world runtime is improved by incorporating the results of~\cite{fuchs}.
Our algorithm is suitable for arbitrarily large primes, limited only by hardware and real-world time constraints.
This is significantly better than a greedy approach with a flood fill-like algorithm with an~$O(p^2)$ runtime, there being~$p^2 + O(p)$ solutions to the Markoff equation modulo~$p$~\cite[Proposition 2.1]{experiments}.
We demonstrate our implementation by confirming the following result.

\begin{theorem}\label{thm:comp-results}
    The graph~$\mathcal{G}_p^\times$ is connected for all primes~$p$ less than one million.
\end{theorem}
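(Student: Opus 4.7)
The plan is to reduce Theorem~\ref{thm:comp-results} to a finite, exhaustive computation. Since the algorithm of this paper decides connectivity of $\mathcal{G}_p^\times$ for an arbitrary prime $p$, it suffices to (i) enumerate all primes $p < 10^6$, (ii) run the algorithm on each such $p$, and (iii) verify that in every case the algorithm certifies connectivity. The enumeration in step (i) is a standard sieve (for example, a segmented sieve of Eratosthenes up to $10^6$), which is negligible in cost compared to the work per prime.

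For the per-prime step, I would apply the algorithm described in the body of the paper, invoking Lemma~\ref{lem:mp} to handle the characterization of $\mathbb{M}_p$ correctly in the two cases depending on whether $-1$ is a quadratic residue modulo $p$. Because the algorithm's runtime is $o(p^{1+\epsilon})$ for every $\epsilon > 0$, the total work across all primes $p < N = 10^6$ is bounded by $\sum_{p < N} o(p^{1+\epsilon})$, which is itself $o(N^{2+\epsilon})$ by summing over at most $N$ primes, and is easily within reach of current hardware. I would further exploit the result of~\cite{fuchs}, as noted in the introduction, to skip primes for which connectivity is already established unconditionally, reducing real-world runtime.

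The main obstacle is not mathematical but engineering: ensuring that the implementation is faithful to the algorithm, free of arithmetic bugs (modular inverses, quadratic residue tests, and correct traversal of orbits of the Vieta involutions and coordinate permutations), and that no prime is silently skipped or misreported as connected. To guard against this, I would (a) cross-check the implementation against the prior brute-force results of~\cite{experiments} on all primes $p < 3000$ by rerunning those cases with the new algorithm and comparing outputs, (b) include assertions inside the implementation that every discovered triple indeed satisfies $x^2 + y^2 + z^2 - xyz \equiv 0 \bmod p$ and that the final component produced contains all of $\mathcal{M}_p^\times$ as counted by the known formula $|\mathcal{M}_p| = p^2 + O(p)$ from~\cite[Proposition 2.1]{experiments}, and (c) run the computation on the full range $p < 10^6$ and record that every prime yields a positive answer.

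Having completed this computation and confirmed that the algorithm reports ``connected'' for each prime $p < 10^6$, the correctness of the algorithm (established elsewhere in the paper) promotes the computational observation into the stated theorem. No further mathematical ingredient is required beyond the algorithm itself and the verification bookkeeping just described.
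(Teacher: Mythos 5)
Your overall strategy is exactly that of the paper: Theorem~\ref{thm:comp-results} is established by running Algorithm~\ref{alg:process} on every prime below~$10^6$ and observing that each run certifies connectivity. Two small points in your verification plan misread how the algorithm actually decides the question. First, the algorithm never constructs a connected component or enumerates~$\mathcal{M}_p^\times$; it counts \emph{bad} triples (Definition~\ref{def:bad-triples}) via Strategies~\ref{str:cartesian} and~\ref{str:cosets}, and concludes connectivity when that count falls below~$4p$, invoking Lemma~\ref{lem:connected-component}. So your assertion in (b) --- that the component produced can be checked against the cardinality $p^2 + O(p)$ --- does not correspond to anything the algorithm computes; checking that each generated triple satisfies the Markoff equation is fine, but a component-size comparison is not available. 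Second, the connectivity results of~\cite{fuchs} only begin at~$p = 1{,}327{,}363$, so they establish connectivity for no prime in the range $p < 10^6$; there are no primes to skip on that basis. The paper uses~\cite{fuchs} here only through the $4p$ lower bound on disconnected components and the explicit breakpoints~\eqref{eq:endgame}, not to prune the list of primes. Neither issue invalidates the proof, but the reliance on the paper's specific ``count bad triples, compare against $4p$'' criterion --- rather than any flood-fill or component construction --- is the essential point you should make explicit.
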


In Section 2, we outline the arguments of~\cite{bgs} and~\cite{fuchs}, elaborating on the features we will utilize in our algorithm.
In Section 3, we construct the data structures and procedures at the core of our algorithm.
Finally, in Section 4, we present the algorithm in total, along with our computational results for all primes below one million, as well as auxillary data useful for short circuiting the computations and improving real world performance.
The Appendix includes implementation specific details regarding the Rust programming language and the Rust features we utilized most heavily: constant type parameterization and monomorphization.

\subsection*{Acknowledgements}
Many thanks to my advisor Elena Fuchs for her mentorship, encouragement, and suggestions throughout this paper.
I also thank Matt Litman for helpful conversations, especially with regards to Section 2.
Finally, thanks to Daniel Martin and Peter Sarnak for helpful feedback on an earlier draft.
This paper is based on work supported by the National Science Foundation under grant DMS-2154624.

\section{The structure of the Markoff graph modulo~$p$}

In this section, we review some of the structural properties of~$\mathcal{G}_p^\times$.
We begin our discussion with the orbits of the rotation maps, which will form useful walks along the graph.

Let~$\mathcal{D}(n)$ be the set of positive divisors of~$n$, and~$\mathcal{D}(p \pm 1) = \mathcal{D}(p - 1) \cup \mathcal{D}(p + 1)$.

We will consider the subgroup of~$\Gamma$ generated by the \textit{rotation maps}, which are compositions of a Markoff Vieta involution with a transposition.
If~$i \in \mathbb{Z}/3\mathbb{Z}$, then
\[ \rot_{i} = \tau_{i + 1, i + 2} \circ V_{i + 1} \]
is the rotation map fixing coordinate~$i$.
Writen explicitly,
\begin{align*}
    rot_1 &\colon (a, b, c) \mapsto (a, c, ac - b), \\
    rot_2 &\colon (a, b, c) \mapsto (ab - c, b, a), \\
    rot_3 &\colon (a, b, c) \mapsto (b, bc - a, c).
\end{align*}

Fix~$(a, b, c) \in \mathcal{M}_p$.
Denote the rotation map~$\rot_1$ restricted to acting on triples with fixed first coordinate~$a$ by
\begin{align*}
        \rot'_a \begin{pmatrix*} b \\ c \end{pmatrix*} &= \begin{pmatrix*} c \\ ac - b \end{pmatrix*} \\
                                                          &= \begin{pmatrix*} 0 & 1 \\ -1 & a \end{pmatrix*} \begin{pmatrix*} b \\ c \end{pmatrix*}.
\end{align*}

Let~$a \neq \pm 2$.
If~$a = \chi + \chi^{-1}$ for~$\chi \in \mathbb{F}_{p^2}$, then the matrix diagonalizes as
\begin{equation}
    \begin{pmatrix*} 0 & 1 \\ -1 & \chi + \chi^{-1} \end{pmatrix*} =
    \begin{pmatrix*}
        1 & 1 \\
        \chi & \chi^{-1}
    \end{pmatrix*}
    \begin{pmatrix*}
        \chi & 0 \\
        0 & \chi^{-1}
    \end{pmatrix*}
    \begin{pmatrix*}
        1 & 1 \\
        \chi & \chi^{-1}
    \end{pmatrix*}^{-1}. \label{eq:diagonalization}
\end{equation}

The size of the orbit of~$\rot_1$ acting on~$(a, b, c)$ is dependent on~$a$ only, and will be the same for any orbit with fixed first coordinate~$a$.
We therefore refer to the \textit{order of a Markoff number~$a$ modulo~$p$} as the size of any orbit with fixed first coordinate~$a$.
According to \eqref{eq:diagonalization}, this is equivalent to
\[ \ord_p(a) =  |\chi| \]
when $a \neq \pm 2$, where~$|\cdot|$ is the multiplicative order in~$\mathbb{F}_{p^2}^\times$.

The following lemma guarentees that the order of $\chi$ always divides $p \pm 1$, a fact we will use in our classification of Markoff triples.

\begin{lemma}\label{lem:order-divides}
    Let~$\chi \in \mathbb{F}_{p^2}^\times$.
    Then~$(\chi + \chi^{-1}) \in \mathbb{Z} / p\mathbb{Z}$ if and only if~$|\chi| \in \mathcal{D}(p \pm 1)$.
\end{lemma}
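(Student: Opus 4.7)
My plan is to characterize membership in $\mathbb{F}_p$ by the Frobenius $\sigma: x \mapsto x^p$ (so $a \in \mathbb{F}_p$ iff $a^p = a$) and then translate the condition $(\chi + \chi^{-1})^p = \chi + \chi^{-1}$ into a polynomial identity in $\chi$ that factors nicely.

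Concretely, I would first note that since $\mathbb{F}_{p^2}/\mathbb{F}_p$ is Galois with Galois group generated by $\sigma$, we have
\[ \chi + \chi^{-1} \in \mathbb{F}_p \iff (\chi + \chi^{-1})^p = \chi + \chi^{-1} \iff \chi^p + \chi^{-p} = \chi + \chi^{-1}. \]
Next I would clear denominators: multiplying through by $\chi^p$ and rearranging gives
\[ \chi^{p+1} + 1 = \chi^{p-1} \cdot \chi \cdot \chi^{-1} \cdot \chi^p \cdot \ldots \]
so to avoid mistakes I would instead rewrite the equality as $\chi^p - \chi = \chi^{-1} - \chi^{-p}$, factor each side as
\[ \chi(\chi^{p-1} - 1) = \chi^{-p}(\chi^{p-1} - 1), \]
and then subtract to obtain
\[ (\chi^{p-1} - 1)(\chi - \chi^{-p}) = 0, \]
which after pulling out the unit $\chi^{-p}$ becomes $(\chi^{p-1} - 1)(\chi^{p+1} - 1) = 0$.

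Since $\mathbb{F}_{p^2}$ is a field, this factored equation holds iff $\chi^{p-1} = 1$ or $\chi^{p+1} = 1$, i.e.\ iff $|\chi|$ divides one of $p \pm 1$, which is exactly the definition of $|\chi| \in \mathcal{D}(p \pm 1)$. For the converse direction I would just run the implications the other way, noting that each step is reversible in the multiplicative group $\mathbb{F}_{p^2}^\times$.

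I do not foresee a genuine obstacle here; the whole lemma is a short Galois-theoretic computation. The only mild care needed is bookkeeping with inverses when clearing $\chi^{-p}$, and remembering that $\chi \in \mathbb{F}_{p^2}^\times$ so that factors like $\chi^{-p}$ are units and may be cancelled without losing solutions. It may also be worth remarking explicitly that the ``$p+1$'' case corresponds exactly to the norm-one subgroup $\ker(N_{\mathbb{F}_{p^2}/\mathbb{F}_p})$ and the ``$p-1$'' case to $\chi \in \mathbb{F}_p^\times$ itself, which provides a conceptual interpretation of the two branches.
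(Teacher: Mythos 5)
Your proof is correct, and it takes a genuinely different route than the paper's. The paper proves only the ``if'' direction algebraically (using $\chi^p = \chi^{\mp 1}$ exactly as you do) and then establishes the converse by a counting argument: among the elements of $\mathbb{F}_{p^2}^\times$ whose order divides $p - 1$ or $p + 1$, the 2-to-1 map $\chi \mapsto \chi + \chi^{-1}$ already covers all $p$ elements of $\mathbb{Z}/p\mathbb{Z}$, so no other $\chi$ can have $\chi + \chi^{-1} \in \mathbb{Z}/p\mathbb{Z}$. Your approach replaces that counting step with the factorization
\[
(\chi + \chi^{-1})^p = \chi + \chi^{-1} \iff (\chi^{p-1} - 1)(\chi^{p+1} - 1) = 0,
\]
which, since $\mathbb{F}_{p^2}$ is an integral domain, gives both implications in one stroke. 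This is cleaner and more self-contained: it avoids the need to separately enumerate roots of unity and track the two fixed points $\pm 1$ of the map (where the paper's ``$2p$'' count is in fact off by two, though its conclusion survives). Your closing remark identifying the two branches with $\mathbb{F}_p^\times$ and the norm-one subgroup $\ker(N_{\mathbb{F}_{p^2}/\mathbb{F}_p})$ is a nice conceptual gloss that the paper does not make explicit. One small cosmetic note: your first attempt at clearing denominators is garbled (you flag it yourself and abandon it); in a final write-up just go directly to the rearrangement $\chi^p - \chi = \chi^{-1} - \chi^{-p}$.
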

\begin{proof}
    If~$\chi \in \mathbb{F}_{p^2}^\times$ with~$|\chi| \in \mathcal{D}(p \pm 1)$, then
    \begin{align*}
        {\left(\chi + \frac{1}{\chi}\right)}^p &= \chi^p + \frac{1}{\chi^p} \\
                                             &= \chi^{\mp 1} + \frac{1}{\chi^{\mp 1}} \\
                                             &= \chi + \frac{1}{\chi},
    \end{align*}
    and therefore~$|\chi + \chi^{-1}| \in \mathcal{D}(p - 1)$, equivalently~$(\chi + \chi^{-1}) \in \mathbb{Z} / p\mathbb{Z}$.

    Since~$\mathbb{F}_{p^2}^\times$ is cyclic, there are~$2p$ values~$\chi \in \mathbb{F}_{p^2}^\times$ for which $|\chi|$ is divisible by~$p - 1$ or~$p + 1$.
    The map~$\phi : \chi + \chi^{-1}$ is 2-to-1 on~$\mathbb{F}_{p^2}^\times$, so the image of~$\phi$ on the set~$\{\chi : |\chi| \in \mathcal{D}(p \pm 1)\}$ is~$p$ values in~$\mathbb{Z} / p\mathbb{Z}$.
    But, that is all of $\mathbb{Z} / p\mathbb{Z}$, so there must be no more values of~$\chi \in \mathbb{F}_{p^2}^\times$ for which~$\chi + \chi^{-1} \in \mathbb{Z} / p\mathbb{Z}$.
\end{proof}

The diagonalization \eqref{eq:diagonalization} allows us to parameterize the Markoff triples in an orbit in terms of $\chi$.
Specifically, if~$(a, b, c) \in \mathcal{M}_p$ and~$a = \chi + \chi^{-1}$, then
\begin{align*}
    \langle \rot'_a\rangle(b, c) = \left\{ k\left(\alpha \chi^\ell + \beta \frac{1}{\chi^\ell}, \alpha \chi^{\ell + 1} + \beta \frac{1}{\chi^{\ell + 1}}\right) : \ell \in \mathbb{Z} \right\},
\end{align*}
where
\begin{align*}
    k = \left(\chi - \frac{1}{\chi}\right)^{-1},\; \alpha = \left(c - \frac{b}{\chi}\right),\text{\ and } \beta = \left(\chi b - c\right).
\end{align*}
A parameterization for arbitrary orbits is given in~\cite[Equation 3]{fuchs} as
\begin{align}
    \left\{ \left( \chi + \frac{1}{\chi}, \frac{\chi + \chi^{-1}}{\chi - \chi^{-1}}\left(r\chi^\ell + \frac{1}{r\chi^\ell}\right), \frac{\chi + \chi^{-1}}{\chi - \chi^{-1}}\left(r\chi^{\ell + 1} + \frac{1}{r\chi^{\ell + 1}}\right)\right) : \ell \in \mathbb{Z} \right\}, \label{eq:markoff-reparam}
\end{align}
where any values~$r, \chi \in \mathbb{F}_{p^2}^\times \setminus \{\pm 1\}$ gives a set of triples solving the Markoff equation over~$\mathbb{F}_{p^2}$ fixed by~$\rot_1$.
For fixed~$\chi \in \mathbb{F}_{p^2}^\times$, the orbits given by $r_1, r_2 \in \mathbb{F}_{p^2}^\times$ according to~\eqref{eq:markoff-reparam} will be the same exactly when~$r_1\langle \chi \rangle = r_2\langle \chi \rangle$.
We therefore seek, for fixed~$\chi \in \mathbb{F}_{p^2}^\times$, one representative $r$ for each coset of~$\langle \chi \rangle$ giving
\[ \frac{r + r^{-1}}{\chi - \chi^{-1}} \in \mathbb{Z} / p\mathbb{Z}, \]
so that our expression~\eqref{eq:markoff-reparam} gives solutions to the Markoff equation in~$\mathbb{Z} / p\mathbb{Z}$.
The solutions are characterized by Lemma~\ref{lem:correct-subgroup}.

\begin{lemma}\label{lem:correct-subgroup}
    Let~$\chi, r \in \mathbb{F}_{p^2}^\times$ and $\chi \neq \pm 1$.
    Let~$k \in \mathbb{Z} / p\mathbb{Z}$ be a quadratic nonresidue.
    \begin{enumerate}[label=(\alph*), ref=(\alph*)]
        \item \label{enum:hyperbolic}
            If~$\chi \in \mathbb{F}_p^\times$, then~$\chi - \chi^{-1} \in \mathbb{F}_p$.
        \item \label{enum:elliptic}
            If~$|\chi| \in \mathcal{D}(p + 1)$, then~$\chi - \chi^{-1} \in \sqrt{k} \mathbb{F}_p$.
        \item \label{enum:fixed-cosets}
            If~$|r| \in \mathcal{D}(2(p + 1)) \setminus \mathcal{D}(p + 1)$, then~$r + r^{-1} \in \sqrt{k}\mathbb{F}_p$.
    \end{enumerate}
\end{lemma}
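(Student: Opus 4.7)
The plan is to leverage the Frobenius automorphism $\sigma: x \mapsto x^p$ of $\mathbb{F}_{p^2}$, whose fixed field is $\mathbb{F}_p$. Writing $\mathbb{F}_{p^2} = \mathbb{F}_p(\sqrt{k})$, the map $\sigma$ sends $\sqrt{k} \mapsto -\sqrt{k}$, so the $+1$-eigenspace of $\sigma$ is $\mathbb{F}_p$ and the $-1$-eigenspace is exactly $\sqrt{k}\mathbb{F}_p$. With this setup, each part reduces to checking that the relevant expression is either fixed or negated by $\sigma$.

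First I would dispense with part \ref{enum:hyperbolic}, which is immediate: closure of $\mathbb{F}_p$ under inversion gives $\chi - \chi^{-1} \in \mathbb{F}_p$.

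Next I would handle part \ref{enum:elliptic} by noting that $|\chi| \in \mathcal{D}(p+1)$ implies $\chi^{p+1} = 1$, hence $\chi^p = \chi^{-1}$. A short computation then gives $\sigma(\chi - \chi^{-1}) = \chi^p - \chi^{-p} = \chi^{-1} - \chi = -(\chi - \chi^{-1})$, placing $\chi - \chi^{-1}$ in the $-1$-eigenspace $\sqrt{k}\mathbb{F}_p$.

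For part \ref{enum:fixed-cosets}, the hypothesis $|r| \in \mathcal{D}(2(p+1)) \setminus \mathcal{D}(p+1)$ forces $r^{p+1}$ to be an element of order exactly $2$ in $\mathbb{F}_{p^2}^\times$, hence $r^{p+1} = -1$ and $r^p = -r^{-1}$. Thus $\sigma(r + r^{-1}) = r^p + r^{-p} = -r^{-1} - r = -(r + r^{-1})$, giving $r + r^{-1} \in \sqrt{k}\mathbb{F}_p$ by the same eigenspace argument.

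There is no serious obstacle here; the only point requiring care is in \ref{enum:fixed-cosets}, where one must argue that $r^{p+1}$ equals $-1$ rather than merely a non-identity root of unity. This follows from $r^{2(p+1)} = 1$ together with the fact that $\mathbb{F}_{p^2}^\times$, being cyclic, contains a unique element of order $2$.
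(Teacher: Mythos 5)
Your proof is correct and follows essentially the same Frobenius-based strategy as the paper: compute the effect of $x \mapsto x^p$ on $\chi - \chi^{-1}$ (resp.\ $r + r^{-1}$) and read off whether the result lands in the $+1$-eigenspace $\mathbb{F}_p$ or the $-1$-eigenspace $\sqrt{k}\mathbb{F}_p$. The paper phrases this slightly differently, showing $(\chi-\chi^{-1})^p = -(\chi-\chi^{-1})$ and then separately that $(\chi-\chi^{-1})^{2p} = (\chi-\chi^{-1})^2$ to conclude the square lies in $\mathbb{F}_p$; your eigenspace framing collapses those two steps into one and makes the final placement in $\sqrt{k}\mathbb{F}_p$ immediate. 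Notably, in part~\ref{enum:fixed-cosets} your argument is actually tighter than the paper's: the paper only invokes $|r| \in \mathcal{D}(2(p+1))$ and stops after showing $(r+r^{-1})^2 \in \mathbb{F}_p$, which by itself leaves open the possibility $r + r^{-1} \in \mathbb{F}_p^\times$; your deduction $r^{p+1} = -1$ from $|r| \in \mathcal{D}(2(p+1)) \setminus \mathcal{D}(p+1)$, giving $\sigma(r+r^{-1}) = -(r+r^{-1})$, closes that gap cleanly (with the degenerate case $r + r^{-1} = 0$ still landing in $\sqrt{k}\mathbb{F}_p$).
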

\begin{proof}
    The proof of~\ref{enum:hyperbolic} is identical to the proof of Lemma~\ref{lem:order-divides} where~$|\chi| \in \mathcal{D}(p - 1)$.
    So, assume~$|\chi| \in \mathcal{D}(p + 1)$.
    Then,
    \begin{align*}
        \left(\chi - \frac{1}{\chi}\right)^p = \frac{1}{\chi} - \chi,
    \end{align*}
    therefore $|\chi - \chi^{-1}| \not\in \mathcal{D}(p - 1)$.
    However,
    \begin{align*}
        \left(\chi - \frac{1}{\chi}\right)^{2p} &= \left(\chi^2 + \frac{1}{\chi^2} - 2\right)^p \\
                                                            &= \left(\frac{1}{\chi^2} + \chi^2 - 2\right),
    \end{align*}
    so $(\chi - \chi^{-1})^2$ has multiplicative order dividing $p - 1$.
    Finally, let $|r| \in \mathcal{D}(2(p + 1))$.
    Then,
    \begin{align*}
        \left(r + \frac{1}{r}\right)^{2p} &= \left(r^2 + \frac{1}{r^2} + 2\right)^p \\
                                          &= \left(\frac{1}{r^2} + r^2 + 2\right),
    \end{align*}
    therefore $(r + r^{-1})^2$ has multiplicative order dividing $p - 1$.
\end{proof}

\begin{corollary}\label{cor:elliptic-magic}
    Let~$\chi, r \in \mathbb{F}_{p^2}^\times$ with~$|\chi| \in \mathcal{D}(p + 1)$,~$|r| \in \mathcal{D}(2(p + 1)) \setminus \mathcal{D}(p + 1)$, and~$\chi \neq \pm 1$.
    Then,~$\chi - \chi^{-1} = b_1\sqrt{k}$ and $r + r^{-1} = b_2\sqrt{k}$ for~$b_1, b_2, k \in \mathbb{Z} / p\mathbb{Z}$ and~$k$ a quadratic nonresidue.
    With these choices of~$\chi$ and~$r$, the orbit~\eqref{eq:markoff-reparam} contains triples with all coordinates in $\mathbb{Z} / p\mathbb{Z}$.
\end{corollary}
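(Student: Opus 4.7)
The first sentence of the corollary is essentially immediate from Lemma~\ref{lem:correct-subgroup}. Fix any quadratic nonresidue~$k \in \mathbb{Z}/p\mathbb{Z}$, so that $\mathbb{F}_{p^2} = \mathbb{F}_p(\sqrt{k})$ decomposes every element uniquely as $u + v\sqrt{k}$ with $u, v \in \mathbb{F}_p$. Part~\ref{enum:elliptic} of the lemma then gives $\chi - \chi^{-1} = b_1\sqrt{k}$, and part~\ref{enum:fixed-cosets} gives $r + r^{-1} = b_2\sqrt{k}$, with the same~$k$.

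For the second assertion, I would examine the three coordinates of \eqref{eq:markoff-reparam} separately. The first coordinate is $\chi + \chi^{-1}$, which lies in $\mathbb{Z}/p\mathbb{Z}$ by Lemma~\ref{lem:order-divides} since $|\chi| \in \mathcal{D}(p+1) \subset \mathcal{D}(p \pm 1)$. The second and third coordinates both have the form
\[
    C \cdot (s + s^{-1}), \qquad C := \frac{\chi + \chi^{-1}}{\chi - \chi^{-1}},
\]
where $s = r\chi^\ell$ or $s = r\chi^{\ell+1}$. Since the numerator of $C$ lies in $\mathbb{F}_p$ and the denominator equals $b_1\sqrt{k}$, rationalizing by $\sqrt{k}/\sqrt{k}$ shows $C \in \sqrt{k}\,\mathbb{F}_p$. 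Hence, if I can show $s + s^{-1} \in \sqrt{k}\,\mathbb{F}_p$ as well, then the product lies in $k\,\mathbb{F}_p = \mathbb{F}_p$, as required.

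To establish $s + s^{-1} \in \sqrt{k}\,\mathbb{F}_p$, it suffices by Lemma~\ref{lem:correct-subgroup}\ref{enum:fixed-cosets} to check that $|s| \in \mathcal{D}(2(p+1)) \setminus \mathcal{D}(p+1)$. The containment in $\mathcal{D}(2(p+1))$ is clear since $|r|$ and $|\chi^m|$ both divide $2(p+1)$, so their product divides $\mathrm{lcm}(2(p+1), p+1) = 2(p+1)$. The key point — and the one delicate computation — is ruling out $|s| \in \mathcal{D}(p+1)$: raising to the $(p+1)$-st power gives
\[
    (r\chi^m)^{p+1} = r^{p+1} \cdot \chi^{m(p+1)} = r^{p+1},
\]
using $|\chi| \mid p+1$. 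This equals $1$ if and only if $|r| \mid p+1$, which is excluded by hypothesis. The main (and only real) obstacle is this divisibility check; once it is in hand, the corollary follows by assembling the three coordinate computations and noting that $\ell$ was arbitrary, so every element of the orbit has all coordinates in $\mathbb{Z}/p\mathbb{Z}$.
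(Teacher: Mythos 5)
The paper leaves this corollary without an explicit proof, and your argument correctly supplies the missing reasoning. The first assertion is, as you say, an immediate reading of Lemma~\ref{lem:correct-subgroup}\ref{enum:elliptic} and~\ref{enum:fixed-cosets} (noting that $\sqrt{k}\,\mathbb{F}_p$ is the same subset of $\mathbb{F}_{p^2}$ for any choice of nonresidue $k$, since the ratio of two nonresidues is a square). Your treatment of the second assertion is the right way to go: the first coordinate $\chi + \chi^{-1}$ lies in $\mathbb{F}_p$ by Lemma~\ref{lem:order-divides}; the prefactor $C = (\chi+\chi^{-1})/(\chi-\chi^{-1})$ lies in $\sqrt{k}\,\mathbb{F}_p$ (and is well-defined since $\chi \ne \pm 1$ gives $b_1 \ne 0$); and the factor $s + s^{-1}$ with $s = r\chi^{\ell}$ or $r\chi^{\ell+1}$ lies in $\sqrt{k}\,\mathbb{F}_p$ by another application of Lemma~\ref{lem:correct-subgroup}\ref{enum:fixed-cosets}, so the product lands in $k\,\mathbb{F}_p = \mathbb{F}_p$. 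The one piece that is genuinely not a direct restatement of the lemma — checking that $|r\chi^m| \in \mathcal{D}(2(p+1)) \setminus \mathcal{D}(p+1)$ — you verify cleanly via $(r\chi^m)^{p+1} = r^{p+1} \ne 1$. (One small wording slip: you say ``their product divides'' where you mean that $|s|$ divides $\operatorname{lcm}(|r|, |\chi^m|)$; this does not affect the argument.) This is the natural proof and matches what the paper implicitly relies on.
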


In the spirit of \cite{bgs}, we say that an element~$a \in \mathbb{Z}/p\mathbb{Z}$ is \textit{parabolic} if $a = \pm 2$.
Otherwise, it is \textit{hyperbolic} if~$\ord_p(a)$ divides~$p - 1$, or~\textit{elliptic} if~$\ord_p(a)$ divides~$p + 1$.

\begin{remark}\label{rmk:parabolic}
The orbits of triples with fixed parabolic coordinates have sizes~$\ord_p(a) = p$ or~$2p$.
If~$a$ is parabolic and~$a = \chi + \chi^{-1}$, then~$\chi = \pm 1$.
The matrix in \eqref{eq:diagonalization} is not invertible, and~$|\chi|$ is~$1$ or~$2$.
\end{remark}

\begin{definition}\label{def:triple-order}
    The \textit{order of the triple~$(a, b, c) \in \mathcal{M}_p$} is
    \[ \ord_p((a, b, c)) = \max(\ord_p(a), \ord_p(b), \ord_p(c)). \]
    If the order of a triple is~$p \pm 1$, then it is called \textit{a triple of maximal order}.
\end{definition}

Bourgain, Gamburd, and Sarnak showed in~\cite{bgs} that~$\mathcal{G}_p$ is connected for every prime~$p$ outside a zero density subset of the primes.
They did so by first observing that all triples of maximal order and parabolic triples belong to the same connected component $\mathcal{C}_p$.
(An explicit proof that triples with parabolic coordinates are connected to triples of maximal order is given in~\cite[Lemma 3.3]{bounding-lifts}.)
Then, they iteratively lower the bound on the orders of triples which must be connected to~$\mathcal{C}_p$.
In the first step, the \textit{endgame}, all triples of order at least~$p^{\delta + 1/2}$ are shown to be connected to~$\mathcal{C}_p$ for some~$\delta$ dependent on~$p$.
In the \textit{middle game}, they show that there is an~$\epsilon > 0$ dependent only on~$p$ for which~$\ord_p(a) > p^\epsilon$ implies that every orbit of~$\rot'_a$ contains a triple of order larger than~$\ord_p(a)$.
By successively walking from triple to triple of higher order, eventually a triple of order at least~$p^{\delta + 1/2}$ is reached.
Finally, in the \textit{opening}, they bound the size of~$|\mathcal{G}_p \setminus \mathcal{C}_p|$ from below, and show that for primes~$p$ with~$p \pm 1$ not having too many factors, there can not be enough unaccounted for triples to constitute a connected component disconnected from $\mathcal{C}_p$.
Specifically,~\citeauthor{bgs} proved in~\cite{bgs} a lower bound of $(\log p)^{1/3}$ on the size of any connected component.
This bound was improved by~\citeauthor{on-the-structure}~\cite{on-the-structure} to~$(\log p)^{7/9}$, and again improved by~\citeauthor{chen}~\cite{chen} to the size being divisible by~$p$.

The bounds on the endgame were made explicit in~\cite{fuchs}, where the following inequality was proved.

\begin{lemma}[{\cite[Proposition 6.1]{fuchs}}]\label{lem:endgame}
    A Markoff triple of order~$d \in \mathcal{D}(p \pm 1)$ is connected to $\mathcal{C}_p$ provided
    \begin{align}
        d > \frac{8\sqrt{p}(p \pm 1)\tau(p \pm 1)}{\phi(p \pm 1)}. \label{eq:endgame}
    \end{align}
\end{lemma}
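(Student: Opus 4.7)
The plan is to fix a Markoff triple $(a, b, c)$ with $\ord_p(a) = d$ and to search within the $\rot_1$-orbit through $(a, b, c)$ for a triple $(a, b', c')$ whose $b'$-coordinate is itself of maximal order $p \pm 1$. Any such triple has order $p \pm 1$, hence lies in $\mathcal{C}_p$ by definition, and since a $\rot_1$-orbit is entirely contained in a single connected component of $\mathcal{G}_p^\times$, this will establish $(a, b, c) \in \mathcal{C}_p$ as well.

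Using the parameterization \eqref{eq:markoff-reparam} with $a = \chi + \chi^{-1}$, the $b$-coordinate along the orbit takes the shape
\[ b_\ell = K \left( r \chi^\ell + r^{-1} \chi^{-\ell} \right), \qquad \ell \in \mathbb{Z}/d\mathbb{Z}, \]
for an explicit constant $K \in \mathbb{F}_{p^2}^\times$ depending on $\chi$. Let $N$ count the indices $\ell$ for which $b_\ell$ has maximal multiplicative order, in $\mathbb{F}_p^\times$ or in the norm-one subgroup of $\mathbb{F}_{p^2}^\times$ as appropriate. Writing the indicator of maximal order via Möbius inversion over the divisors of $p \pm 1$, I would decompose
\[ N = \frac{\phi(p \pm 1)}{p \pm 1}\, d \;+\; \sum_{\psi \neq 1} c_\psi \sum_{\ell \in \mathbb{Z}/d\mathbb{Z}} \psi(b_\ell), \]
where the outer sum ranges over at most $\tau(p \pm 1) - 1$ nontrivial multiplicative characters $\psi$, with bounded coefficients $c_\psi$. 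Each inner sum is an exponential sum over a coset of the cyclic subgroup $\langle \chi \rangle \leq \mathbb{F}_{p^2}^\times$, to which Weil's bound on the rational function $rT + r^{-1}T^{-1}$ applies, contributing $O(\sqrt{p})$. Tracking constants, the total error is at most $8\, \tau(p \pm 1)\sqrt{p}$, so the hypothesis \eqref{eq:endgame} is precisely what forces $N > 0$ and produces the desired maximal-order triple somewhere in the orbit.

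The main obstacle will be pinning down the explicit constant $8$ in the error term: this requires a uniform Weil bound on each $\sum_\ell \psi(b_\ell)$ together with a careful inclusion-exclusion that treats the hyperbolic and elliptic divisor lattices in parallel. Lemma~\ref{lem:correct-subgroup} and Corollary~\ref{cor:elliptic-magic} will be used to guarantee that the parameterization stays in $\mathbb{Z}/p\mathbb{Z}$ in both cases, so that ``maximal order'' can be tested using characters of the appropriate cyclic group ($\mathbb{F}_p^\times$ of order $p - 1$, or the norm-one subgroup of order $p + 1$), and so that the count $N$ correctly isolates the elements of order exactly $p \pm 1$.
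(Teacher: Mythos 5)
This lemma is quoted in the paper with no internal proof --- it is cited verbatim from \cite[Proposition 6.1]{fuchs} --- so there is no in-text argument to compare against; what follows is an assessment of your sketch against the actual argument in that reference (which goes back to the endgame of \cite{bgs}). Your high-level plan is the right one: parameterize the orbit of $\rot_1$ through $(a,b,c)$, count the $\ell$ for which the new coordinate has maximal order, and beat the error term with a Weil-type bound to force the count positive. However, there is a genuine technical gap in the way you set up the character sum, and it is not a cosmetic one.

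The Markoff ``order'' of a coordinate $b$ is $|\chi_b|$, where $b = \chi_b + \chi_b^{-1}$, not the multiplicative order of $b$ itself in $\mathbb{F}_p^\times$. Your decomposition applies multiplicative characters $\psi$ of $\mathbb{F}_p^\times$ (or of the norm-one subgroup) directly to $b_\ell$, as in $\sum_\ell \psi(b_\ell)$. This detects $\ord_{\mathbb{F}_p^\times}(b_\ell)$, which is an unrelated quantity; it will happily miss $b_\ell = 0$, and it does not see the two-to-one trace map $\chi \mapsto \chi + \chi^{-1}$ at all. The correct indicator must be built from characters evaluated at $\chi_{b_\ell}$, which is a \emph{root} of $T^2 - b_\ell T + 1$, not a rational function of $\chi^\ell$. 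The standard way around this is a double count: introduce an auxiliary variable $\mu$ with $\mu + \mu^{-1} = b_\ell$ and $\mu$ ranging over the relevant subgroup, expand both the $\ell$-membership and the $\mu$-membership in characters, and then apply Weil's theorem to the resulting curve $K\bigl(r u + r^{-1}u^{-1}\bigr) = \mu + \mu^{-1}$ with $(u,\mu)$ constrained to subgroups. That curve is not the Laurent polynomial $rT + r^{-1}T^{-1}$ to which you appeal; the genus and constant bookkeeping, and hence the factor $8$, come out of the correct geometry. Relatedly, your claim that only ``at most $\tau(p\pm 1) - 1$ nontrivial characters'' occur is not right as stated --- M\"obius inversion for the full-order indicator brings in $\phi(m)$ characters of each squarefree order $m \mid p\pm 1$; $\tau(p\pm 1)$ enters instead through the summation over divisors after swapping the order of summation and bounding the inner coefficient sums. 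None of these issues are fatal to the strategy --- they are exactly what \cite[Proposition 6.1]{fuchs} carries out --- but as written your proof has a hole precisely at the point where the hard estimate lives.
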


We will call the right hand side of~\eqref{eq:endgame} for each choice of sign the \textit{endgame breakpoints} and denote them~$B_{(p,\pm)}$.

We now wish to bound from below the orders of triples which are part of the middle game.
We will consider an orbit of size~$t$, and give an upper bound on the number of triples in that orbit with order less than~$t$.
If the number of such triples is less than~$t$, then the orbit being considered must contain a triple of order larger than~$t$.
A suitable bound is given in~\cite{fuchs} and relies on an approximation of~\citeauthor{gcd-finite-fields}~\cite{gcd-finite-fields}.

\begin{lemma}[{\cite[Lemma 2.1]{fuchs}}]
        If~$\chi \in \mathbb{F}_{p^2}^\times$ has order~$t > 2$, then the number of congruence classes~$n$ modulo~$t$ for which
        \[ \ord_p\left(\frac{\chi + \chi^{-1}}{\chi - \chi^{-1}}\right){\left(sr^n + {(sr^n)}^{-1}\right)} \]
        divides~$d$ is at most
        \begin{align}
            \frac{3}{2}\max\left(\sqrt[3]{6td}, \frac{4td}{p}\right). \label{eq:cz-bounds}
        \end{align}
\end{lemma}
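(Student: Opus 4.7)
The plan is to rewrite the divisibility condition as a polynomial equation in two variables constrained to lie in prescribed cyclic subgroups of $\mathbb{F}_{p^2}^\times$, and then invoke the Corvaja--Zannier estimate of \cite{gcd-finite-fields} cited just before the lemma. Set $A = (\chi + \chi^{-1})/(\chi - \chi^{-1})$; this is well-defined because $t > 2$ forces $\chi \neq \pm 1$ and hence $\chi - \chi^{-1} \neq 0$. Let $\mu_d \subset \mathbb{F}_{p^2}^\times$ denote the subgroup of $d$-th roots of unity. The condition that $\ord_p\bigl[A(sr^n + s^{-1}r^{-n})\bigr]$ divides $d$ is equivalent to the existence of $z \in \mu_d$ with $A(sr^n + s^{-1}r^{-n}) = z$.

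Setting $y = r^n$ and clearing the denominator $y$, this rearranges to
\[
    (As)\,y^2 \;-\; z\,y \;+\; (As^{-1}) \;=\; 0.
\]
As $n$ runs through $\mathbb{Z}/t\mathbb{Z}$ the value $y = r^n$ traces out the cyclic subgroup $\langle r \rangle \subseteq \mathbb{F}_{p^2}^\times$, and since the quantity being counted depends on $n$ only through $y$, it suffices to bound the number of pairs $(y, z) \in \langle r \rangle \times \mu_d$ satisfying the displayed trinomial relation. The Corvaja--Zannier result of \cite{gcd-finite-fields} bounds the number of common solutions of a single polynomial equation in two variables, each constrained to a multiplicative subgroup of a finite field of prescribed order, by a quantity of the shape $c_1 \max\!\bigl(\sqrt[3]{c_2\,td},\; c_3\,td/p\bigr)$. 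Tracking the explicit constants for our particular binomial-type quadratic delivers the bound $\tfrac{3}{2}\max(\sqrt[3]{6td},\, 4td/p)$.

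The main obstacle is to verify the non-degeneracy hypothesis required by the Corvaja--Zannier estimate --- namely that $(As)\,y^2 - z\,y + (As^{-1})$ is not of a shape that would inflate the count, such as admitting a monomial identity $y^a z^b = \mathrm{const}$ holding along the entire solution set. Here $As$ and $As^{-1}$ are both nonzero (because $s \neq 0$ and $\chi \neq \pm 1$), and the three exponent vectors $(2,0)$, $(1,1)$, $(0,0)$ span a rank-two sublattice of $\mathbb{Z}^2$, so no nontrivial monomial relation is forced on the variety. This is precisely the input required by \cite{gcd-finite-fields}, so the cited estimate applies; the remainder of the proof reduces to bookkeeping the constants between the formulation of \cite{gcd-finite-fields} and our trinomial to match the stated expression.
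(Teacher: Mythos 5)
Your high-level plan is the right one: clear denominators, land in a polynomial equation on a product of cyclic subgroups, and invoke the Corvaja--Zannier estimate of \cite{gcd-finite-fields}. (Note also that the present paper does not prove this lemma itself; it is quoted from \cite{fuchs}, where that strategy is carried out, so there is no internal proof here to match against.) But your reduction contains a genuine error that changes the resulting curve.

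In this paper $\ord_p$ is \emph{not} the multiplicative order of its argument. It is defined in Section~2 so that $\ord_p(a) = |\chi_a|$ whenever $a = \chi_a + \chi_a^{-1}$, i.e., it is the multiplicative order of the \emph{parameter} $\chi_a$, not of $a$ itself. Hence $\ord_p\bigl(A(sr^n + (sr^n)^{-1})\bigr)$ dividing $d$ means that the corresponding $\chi_a$ is a $d$-th root of unity, equivalently
\[
    A\bigl(sr^n + (sr^n)^{-1}\bigr) \;=\; z + z^{-1} \quad\text{for some } z \in \mu_d,
\]
not $A(sr^n + (sr^n)^{-1}) = z$ as you wrote. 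Setting $y = r^n$ and clearing denominators by $syz$ gives
\[
    A z\,(s^2 y^2 + 1) \;-\; s y\,(z^2 + 1) \;=\; 0,
\]
a degree-$3$ curve in $(y,z)$ with monomials $y^2 z$, $z$, $y z^2$, $y$, rather than the degree-$2$ trinomial $(As)y^2 - zy + (As^{-1})$ you obtained.

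This matters beyond bookkeeping. The explicit constants $\tfrac{3}{2}$, $6$, and $4$ in the stated bound are tied to the degree and shape of the curve fed into the Corvaja--Zannier estimate, and a degree-$2$ trinomial would not reproduce them. It also changes the non-degeneracy verification, which must be carried out for the exponent set $\{(2,1),(0,1),(1,2),(1,0)\}$ rather than $\{(2,0),(1,1),(0,0)\}$. With the corrected reduction the rest of your plan is sound, but as written the argument counts solutions of the wrong equation and so does not establish the lemma.
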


We now introduce the notion of maximal divisors, coined in~\cite{fuchs}.

\begin{definition}[{\cite[Definition 1.2]{fuchs}}]
    Let~$n$ be a positive integer, and let~$x \in \mathbb{R}$.
    A positive divisor~$d$ of~$n$ is \textit{maximal with to respect to~$x$} if~$d \leq x$ and there is no other positive divisor~$d'$ of~$n$ such that~$d' \leq x$ and~$d \mid d'$.
    The set of maximal divisors of~$n$ with respect to~$x$ is denoted~$\mathfrak{M}_x(n)$.
\end{definition}

Fix a divisor~$t$ of~$p \pm 1$.
If we sum~\eqref{eq:cz-bounds} over all maximal divisors of~$p - 1$ and~$p + 1$ with respect to~$t$, then we have an upper bound on the number of triples with order less than~$t$ in each orbit of order~$t$.
Explicitly, if
\begin{align}
        t > \sum_{d \in \mathfrak{M}_t(p \pm 1)} \frac{3}{2}\max\left(\sqrt[3]{6td}, \frac{4td}{p}\right), \label{eq:middle-game-inequality}
\end{align}
then every orbit of order~$t$ is guaranteed to have a triple of order larger than~$t$.
(In fact, this is still double counting elements of order dividing more than one maximal divisor; sharpening this bound is a direction for future work.)

Let~$L_p$ be the smallest~$d \in \mathcal{D}(p \pm 1)$ such that every~$d' \geq d$, with $d' \in \mathcal{D}(p \pm 1)$, satisfies~\eqref{eq:middle-game-inequality}.
If it exists, then we call~$L_p$ the \textit{middle game breakpoint}.

\begin{lemma}\label{lem:middle-game}
    Every triple of order at least~$L_p$ is connected to~$\mathcal{C}_p$.
\end{lemma}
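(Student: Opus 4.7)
The plan is to bootstrap from the middle-game inequality into the endgame: starting with $T$, we iteratively replace it with a rotation-orbit neighbour of strictly larger order, and stop once the order surpasses the endgame breakpoint $B_{(p,\pm)}$, at which point Lemma~\ref{lem:endgame} completes the connection to $\mathcal{C}_p$.

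In detail, put $T_0 := T$ with $t_0 := \ord_p(T_0) \geq L_p$. If any coordinate of $T_0$ is parabolic, then $T_0 \in \mathcal{C}_p$ already, by Remark~\ref{rmk:parabolic} together with the observation just before Lemma~\ref{lem:endgame}. Otherwise Lemma~\ref{lem:order-divides} places every coordinate order, and hence $t_0$ itself, in $\mathcal{D}(p \pm 1)$. Permute so the first coordinate realises the maximum order $t_0$; the $\rot_1'$-orbit through $T_0$ then has exactly $t_0$ elements, all lying in the same component of $\mathcal{G}_p^\times$ as $T_0$ because each rotation step is a Vieta involution followed by a transposition of the remaining two coordinates. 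Since $t_0 \geq L_p$, the definition of $L_p$ gives inequality~\eqref{eq:middle-game-inequality} at $t = t_0$, so strictly fewer than $t_0$ orbit members have every coordinate order bounded by $t_0$; any orbit member outside this bad set supplies a $T_1$ with $t_1 := \ord_p(T_1) > t_0$ and $t_1 \in \mathcal{D}(p \pm 1)$, again by Lemma~\ref{lem:order-divides}.

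Repeating yields a strictly increasing sequence $t_0 < t_1 < t_2 < \cdots$ in the finite set $\mathcal{D}(p \pm 1)$, with all $T_i$ in the same component of $\mathcal{G}_p^\times$. Because each $t_i \geq L_p$ keeps the middle-game jump available, the sequence cannot stall inside the interval $[L_p, B_{(p,\pm)}]$; being strictly increasing in $\mathcal{D}(p \pm 1)$, it must leave this interval after boundedly many steps, meaning some $T_N$ satisfies $t_N > B_{(p,\pm)}$ (with the sign matching $t_N \mid p \pm 1$). Lemma~\ref{lem:endgame} then places $T_N \in \mathcal{C}_p$, and component-connectedness along the chain $T_0 \sim T_1 \sim \cdots \sim T_N$ returns $T = T_0 \in \mathcal{C}_p$.

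The main obstacle I anticipate is the bookkeeping identifying rotation orbits with paths in $\mathcal{G}_p^\times$: one must check that the transposition baked into each rotation step does not exit the connected component of the starting triple. This is standard in the Bourgain--Gamburd--Sarnak framework, and on the slice of triples with fixed first coordinate it follows by observing that $V_2$ and $V_3$ already cover the rotation orbit. Granted this, the remainder of the argument is a combinatorial consequence of the finiteness of $\mathcal{D}(p \pm 1)$, Lemma~\ref{lem:order-divides}, and the defining property of $L_p$.
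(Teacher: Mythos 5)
Your proof takes essentially the same route as the paper's: iterate the middle-game inequality~\eqref{eq:middle-game-inequality}, which by the defining property of $L_p$ keeps supplying a neighbouring triple of strictly larger order in the finite poset $\mathcal{D}(p \pm 1)$, until you reach $\mathcal{C}_p$. The one divergence is your stopping criterion: you terminate when $t_N > B_{(p,\pm)}$ and invoke Lemma~\ref{lem:endgame}, whereas the paper simply iterates until a triple of order $p \pm 1$ is produced and then appeals to the standing observation that triples of maximal order lie in $\mathcal{C}_p$ by definition of that component. The endgame detour is unnecessary, and it introduces a small edge case your phrasing does not cover -- if the strictly increasing sequence tops out at $p \pm 1$ without ever exceeding $B_{(p,\pm)}$, your stated exit condition never fires -- but this is cosmetic since arrival at maximal order already finishes the argument.
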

\begin{proof}
    Let~$v_1 \in \mathcal{M}_p^\times$ with order~$d > L_p$.
    Then,~$v_1$ is connected to some triple~$v_2$ of order~$d' \gneq d$.
    This argument may be repeated until a triple of order~$p \pm 1$ is found, at which point we have reached~$\mathcal{C}_p$.
\end{proof}

Algorithm~\ref{alg:middlegame} shows a procedure for calculating~$L_p$.
\begin{algorithm}[h]
        \SetAlgoLined%
        \SetKwData{Sum}{sum}
        \Input{prime~$p$.}
        \KwResult{$L_p$, either a real number or \None.}

        $L_p \leftarrow$ \None\;
        \For{$t$ \In $\mathcal{D}(p \pm 1) \setminus \{2\}$ in ascending order}{%
            \Sum~$\leftarrow$~$0$\;
            \For{$d$ \In $\mathcal{D}$, $d < t$}{%
                \Sum~$\leftarrow$~$\Sum + \max(\sqrt[3]{6dt}, 4dt/p)$\;
            }
            \If{$t \geq \Sum$ {\bf and}~$L_p$ {\bf is} \None}{%
                $L_p \leftarrow t$\;
            }
            \ElseIf{$t < \Sum$}{%
                $L_p \leftarrow$ \None\;
            }
        }
        \Return $L_p$\;

        \caption{Finding the middle game breakpoint $L_p$.}%
        \label{alg:middlegame}
\end{algorithm}%

In~\cite{chen},~\citeauthor{chen} showed that the size of any connected component disconnected from~$\mathcal{C}_p$ is divisible by~$p$.
This result was used in~\cite{fuchs} to show the following.

\begin{lemma}[{\cite[Lemma 2.2]{fuchs}}]\label{lem:connected-component}
    If~$p > 3$, then~$|\mathcal{G}_p \setminus \mathcal{C}_p|$ is divisible by~$4p$.
\end{lemma}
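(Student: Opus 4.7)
The plan is to combine Chen's divisibility result (cited just above the lemma) with a symmetry argument from an order-$4$ group acting on the Markoff graph. Let $G = \{1, \sigma_1, \sigma_2, \sigma_3\}$, where $\sigma_i$ negates the two coordinates of a triple other than the $i$-th; each $\sigma_i$ preserves the Markoff equation (negating two of $x, y, z$ leaves both $x^2 + y^2 + z^2$ and $xyz$ fixed), and one checks $\sigma_i \sigma_j = \sigma_k$ for $\{i,j,k\} = \{1,2,3\}$, so $G \cong (\mathbb{Z}/2\mathbb{Z})^2$. A direct calculation shows each $\sigma_i$ commutes with each Markoff Vieta involution $V_j$; for example $V_1\sigma_2(a,b,c) = (a - bc, b, -c) = \sigma_2 V_1(a,b,c)$, and the remaining cases are analogous. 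Hence $G$ acts on $\mathcal{G}_p^\times$ by graph automorphisms and permutes its connected components.

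I would next establish that $G$ acts freely on $\mathcal{M}_p^\times$, and that $\mathcal{C}_p$ is $G$-invariant. A nontrivial $\sigma_i$ fixing $(a,b,c)$ would force two coordinates to satisfy $x \equiv -x \pmod{p}$; since $p > 2$ both vanish, and the Markoff equation then forces the third to vanish too, so only $(0,0,0)$ is fixed by any nontrivial element of $G$. For $G$-invariance, note that $a = \chi + \chi^{-1}$ implies $-a = (-\chi) + (-\chi)^{-1}$, so $\ord_p(-a) = \ord_p(a)$, meaning $G$ preserves the set of triples of maximal order. Since $\mathcal{C}_p$ contains all such triples, each $\sigma \in G$ carries some vertex of $\mathcal{C}_p$ back into $\mathcal{C}_p$, forcing $\sigma(\mathcal{C}_p) = \mathcal{C}_p$ and hence $G$-invariance of $\mathcal{G}_p \setminus \mathcal{C}_p$.

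To finish, I would perform an orbit count. For each connected component $\mathcal{C}' \subseteq \mathcal{G}_p \setminus \mathcal{C}_p$ with setwise $G$-stabilizer $H$, freeness of the $G$-action on triples implies that $H$ acts freely on $\mathcal{C}'$, so $|H|$ divides $|\mathcal{C}'|$; the total number of triples in the $G$-orbit of $\mathcal{C}'$ is then $|\mathcal{C}'| \cdot (|G|/|H|) = 4 \cdot (|\mathcal{C}'|/|H|)$, a multiple of $4$. Summing over $G$-orbits of components yields $4 \mid |\mathcal{G}_p \setminus \mathcal{C}_p|$, and combining with Chen's $p \mid |\mathcal{G}_p \setminus \mathcal{C}_p|$ together with $\gcd(4, p) = 1$ for $p > 3$ gives the desired $4p \mid |\mathcal{G}_p \setminus \mathcal{C}_p|$. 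The main obstacle is the $G$-invariance of $\mathcal{C}_p$, which rests on knowing that $\mathcal{C}_p$ contains a triple of maximal order---precisely the fact invoked just above the lemma---after which the remainder is routine orbit counting combined with Chen's divisibility.
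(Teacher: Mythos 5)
Your overall plan---exploiting the Klein four group of double-sign changes as graph automorphisms, proving freeness on $\mathcal{M}_p^\times$ and invariance of $\mathcal{C}_p$, orbit-counting to get divisibility by $4$, and combining with Chen's divisibility by $p$---is the right strategy, and most of it checks out. However, the step establishing $G$-invariance of $\mathcal{C}_p$ contains a false claim. You assert that $\ord_p(-a) = \ord_p(a)$ because $-a = (-\chi) + (-\chi)^{-1}$; but this only shows $\ord_p(-a) = |-\chi|$, and $|-\chi|$ need not equal $|\chi|$. In fact $|-\chi| = |\chi|$ if and only if $4 \mid |\chi|$. A concrete counterexample: in $\mathbb{F}_7$, we have $1 = 5 + 5^{-1}$ with $|5| = 6$, so $\ord_7(1) = 6$; yet $-1 = 6 = 4 + 4^{-1}$ with $|4| = 3$, so $\ord_7(-1) = 3 \neq 6$. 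More generally, for $p \equiv 3 \pmod 4$ every hyperbolic element of maximal order $p-1$ is sent by negation to one of order $(p-1)/2$, so $G$ does \emph{not} preserve the set of triples of maximal order.

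Fortunately the conclusion you want (that $\mathcal{C}_p$ is $G$-stable) is still true and has a cleaner justification that sidesteps orders entirely: $\sigma_i$ \emph{fixes the $i$-th coordinate}. Pick any $a$ with $\ord_p(a) = p-1$ (there are $\phi(p-1)/2 \geq 1$ such values, none equal to $\pm 2$, and each occurs as a Markoff coordinate by Lemma~\ref{lem:mp}); every triple $(a,b,c) \in \mathcal{M}_p^\times$ lies in $\mathcal{C}_p$, and $\sigma_1(a,b,c) = (a,-b,-c)$ again has first coordinate $a$ of maximal order and so again lies in $\mathcal{C}_p$. Since $\sigma_1$ permutes components and $\sigma_1(\mathcal{C}_p) \cap \mathcal{C}_p \neq \emptyset$, it stabilizes $\mathcal{C}_p$; the same argument handles $\sigma_2$ and $\sigma_3$. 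With this repair the rest of your argument (free action, orbit count giving $4 \mid |\mathcal{G}_p^\times \setminus \mathcal{C}_p|$, then $4p \mid |\mathcal{G}_p^\times \setminus \mathcal{C}_p|$ for $p>3$) goes through. Note also that the paper itself only cites this lemma from Fuchs et al.\ without reproducing a proof, so there is no in-text argument to compare against; your reconstruction, once patched, is a plausible rendition of what that reference does.
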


Lemma~\ref{lem:endgame} and Corollary~\ref{lem:middle-game} give two bounds on the orders of triples in~$\mathcal{C}_p$.
Our goal is now to count the number of triples which we have not yet shown are connected to~$\mathcal{C}_p$.
If the number of triples thus counted is less than~$4p$, then~$\mathcal{G}_p$ is connected by Lemma~\ref{lem:connected-component}.

\begin{definition}\label{def:small-coord}
    Let~$\phi$ be a boolean function~$\mathbb{F}_p \to \mathbb{B}$.
    A coordinate~$a$ has \textit{small order with respect to~$\phi$} if~$\phi(a) = 0$.
    The set of coordinates with small order is denoted~$\mathcal{S}_\phi$.
\end{definition}

We will classify the coordinates with small order based on the boolean function
\begin{align*}
    \phi(a) =
    \begin{cases}
        1 & a \text{\ is hyperbolic and } \ord_p(a) < \min(L_p, B_{(p,-)}), \\
        1 & a \text{\ is elliptic and } \ord_p(a) < \min(L_p, B_{(p,+)}), \\
        0 & \text{otherwise},
    \end{cases}
\end{align*}
where~$L_p$ and $B_{(p,\pm)}$ are the middle game and endgame breakpoints, respectively.

\begin{definition}\label{def:bad-triples}
    Let~$\varphi$ be a boolean function~$\mathcal{M}_p \to \mathbb{B}$.
    A triple~$v \in \mathcal{M}_p$ is \textit{bad with respect to~$\varphi$} if~$\varphi(v) = 0$.
    The set of bad triples is denoted~$\mathcal{B}_\varphi$.
\end{definition}

For the remainder of the paper, we will elide the reference to~$\phi$ and~$\varphi$ when referring to the coordinates with small order~$\mathcal{S} = \mathcal{S}_\phi$ and bad triples~$\mathcal{B} = \mathcal{B}_\varphi$.

We now wish to construct a boolean function~$\varphi$ with two competing mandates.
First,~$\varphi$ must be as sensitive as possible; that is, we wish to minimize the number of triples~$v$ connected to $\mathcal{C}_p$ but for which~$\varphi(v) = 0$.
Second, we would like~$\varphi$ to be easy to compute, in the sense that a computer can quickly identify bad triples with respect to~$\varphi$.
To this end, we have two strategies:

\begin{strategy}\label{str:cartesian}
    For each~$(a, b) \in \mathcal{S} \times \mathcal{S}$, determine the zero, one, or two values of~$c$ for which~$(a, b, c) \in \mathcal{M}_p$ using~\eqref{eq:quadratic}.
    Any triple found this way for which~$a$,~$b$, and~$c$ all have small order is a bad triple.
\end{strategy}

\begin{strategy}\label{str:cosets}
    For each~$a \in \mathcal{S}$ and~$a = \chi + \chi^{-1}$, and for each coset of~$\langle \chi \rangle$, pick a representative~$r \in \mathbb{F}_{p^2}^\times / \langle \chi \rangle$ and calculate the entries in the orbit of $\rot_a'$ given by~\eqref{eq:markoff-reparam}.
    Note that $r \in \mathbb{Z} / p\mathbb{Z}$ if $a$ is hyperbolic, and $r \in \mathbb{F}_{p^2}^\times$ with order dividing $2(p + 1)$ if $a$ is elliptic; see Corollary~\ref{cor:elliptic-magic}.
    Check a set number of triples in the orbit\footnotemark; if they are all of small order, then every triple in the orbit is bad.
    Otherwise, none of the triples in the orbit are bad.
\end{strategy}
\footnotetext{
    See Figure~\ref{fig:coset-checks} for the maximum number of triples checked per orbit.
    We cap the number of triples checked to prevent our program hanging on a prime for which an exceptional number of checks must be performed.
    In practice, we found no such exceptional prime out of all primes less than one million, or among those randomly sampled less than 110,000,000.
}

For every~$a \in \mathbb{F}_p$, we run one of the two above strategies to identify all bad triples with first coordinate~$a$.
We choose the strategy requiring fewer checks, i.e., we choose Strategy~\ref{str:cartesian} when~$|\mathcal{S} \times \mathcal{S}| < (p \pm 1) / |\chi|$, and Strategy~\ref{str:cosets} otherwise.

\begin{remark}
    Strategy~\ref{str:cosets} is more sensitive, since a triple may have all coordinates with small order and yet still be ``good''.
    It is possible that, for some prime~$p$, choosing to only run Strategy~\ref{str:cosets} could affirmatively show that~$\mathcal{G}_p$ is connected while sometimes running Strategy~\ref{str:cartesian} is inconclusive.

    Strategy~\ref{str:cosets} has a second advantage; it will only ever test Markoff triples.
    Strategy~\ref{str:cartesian}, on the other hand, must filter through~$(a, b)$ pairs for which no value of~$c$ gives a Markoff triple.
    The runtime cost of performing this filter is similar to the weakening of the estimations in~\cite{fuchs} caused by assuming each pair~$(a, b)$ has two values of~$c$ making~$(a, b, c) \in \mathcal{M}_p$.
    Nevertheless, when the middle game breakpoint or~$|\chi|$ is small, Strategy~\ref{str:cartesian} may still be faster.
\end{remark}

\section{Computational Machinery}

In this section, we describe the data structures and procedures needed to implement the strategies described in Section~2.
We begin with the \textit{factor trie}, encoding a canonical poset on~$\mathcal{D}(p \pm 1)$.
This poset will allow us to recursively generate only those elements~$a \in \mathbb{Z} / p\mathbb{Z}$ satisfying $\ord_p(a)$ less than some upper bound (or~$a$ being parabolic), without having to filter the entirety of~$\mathbb{Z} / p\mathbb{Z}$ by order.

For each~$n = p_1^{t_1} \cdots p_n^{t_n}$ with primes in increasing order we associate a word
\begin{align}
    w(n) = \underbrace{p_1 \ldots p_1}_{t_1} \ldots \underbrace{p_n \ldots p_n}_{t_n}. \label{eq:word}
\end{align}

Let~$\mathcal{D}(n)$ be the set of positive divisors of~$n$.
Define a partial ordering~$\prec$ on~$\mathcal{D}$ as~$\ell \prec m$ if the word~$w(\ell)$ is a prefix for the word~$w(m)$.
If~$\ell \prec m$ and there is no~$n$ such that~$\ell \prec n \prec m$, then we write~$\ell \precdot m$.

Define\footnotemark\ the factor trie~$T_n = (V, E)$ as the graph with vertex set~$V = \{w(d) : d \in \mathcal{D}(n)\}$ and a directed edge~$(\ell, m) \in E$ if~$\ell \precdot m$.
\footnotetext{%
    Some authors use this as the definition for a tree; others define a tree as a connected acylcic graph.
    These definitions are equivalent up to a relabeling of the vertex set.
    We use the term \textit{trie} to emphasize that the vertices are \textit{per se} their associated words.
}%
An example factor trie for~$n = 60$ is shown in Figure~\ref{fig:trie-example}.

\begin{figure}[h]
\begin{center}
        \begin{tikzpicture}
        [ level distance=35
        , level 1/.style={sibling distance=90}
        , level 2/.style={sibling distance=50}
        , level 3/.style={sibling distance=30}
        ]
                \node {$1$}
                    child {node {$2$} edge from parent[->]
                        child {node {$2^2$}
                            child {node {$2^2 \cdot 3$}
                                child {node {$2^2 \cdot 3 \cdot 5$}}
                            }
                            child {node {$2^2 \cdot 5$}}
                        }
                        child {node {$2 \cdot 3$}
                            child {node {$2 \cdot 3 \cdot 5$}}
                        }
                        child {node {$2 \cdot 5$}}
                    }
                    child {node {$3$} edge from parent[->]
                        child {node {$3 \cdot 5$}}
                    }
                    child {node {$5$} edge from parent[->]};
        \end{tikzpicture}
        \caption{The factor trie for~$n = 60$.}%
        \label{fig:trie-example}
\end{center}
\end{figure}
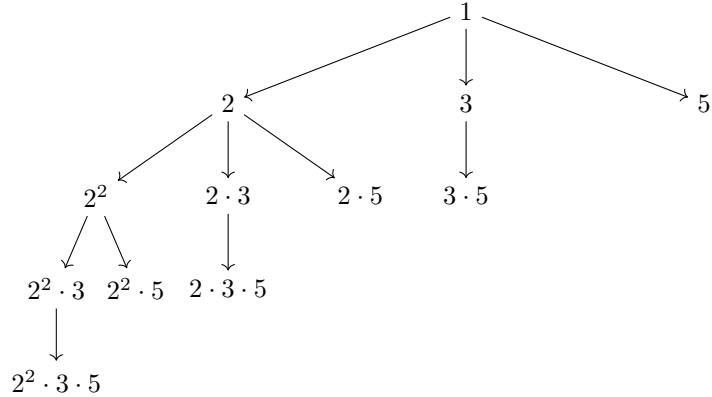%

Algorithm~\ref{alg:trie-construction} shows a recursive procedure for constructing factor tries.

\begin{algorithm}[h]
    \SetAlgoLined%
    \SetKwData{Bp}{bp}
    \SetKwData{Divisors}{divisors}
    \SetKwData{T}{t}
    \SetKwData{D}{d}
    \SetKwData{Sum}{sum}
    \SetKwFunction{NewTrie}{new\_trie}
    \Input{%
        $n = p_1^{t_1} \cdots p_n^{t_n}$, \\
        integers $d_1, \ldots, d_m$.
    }
    \KwResult{Factor trie $T$ for $p_1^{d_1} \cdots p_m^{d_m}$.}
    \Base{\NewTrie{$0$}.}

    \Fn{\NewTrie{$n$, $d_1,\ldots,d_m$}}{%
        $T$ $\leftarrow$ new node\;
        \For{$i$ \In $m$ \KwTo $n$}{%
            \If{$d_i < t_i$}{%
                $i$th child of $T$ $\leftarrow$ \NewTrie{$n$, $d_1,\ldots,d_{i - 1}, d_i + 1$}\;
            }
        }
        \Return $T$\;
    }

    \caption{Constructing a factor trie.}%
    \label{alg:trie-construction}
\end{algorithm}%

We have two tries for each prime $p$, given by~$T_{p + 1}$ and~$T_{p - 1}$.
Associate a group to each node of these tries via the functor~$\mathcal{F} \colon T_{p + 1} \amalg T_{p - 1} \to \mathbf{Ab}$ given by~$\mathcal{F} = \mathcal{F}_{p - 1} \amalg \mathcal{F}_{p + 1}$, where
\[ \mathcal{F}_{p \pm 1} \colon w(d) \mapsto \mathbb{Z} / d\mathbb{Z} \]
and~$\mathcal{F}_{p \pm 1}$ takes directed edges in the graph to inclusion maps.

The tries~$T_{p \pm 1}$ also form the call graph for our recursive procedure generating the elements of~$\mathbb{Z} / p\mathbb{Z}$.
If~$d_1 \prec d_2$, then~$|\mathcal{F}(w(d_1))| < |\mathcal{F}(w(d_2))|$; furthermore, if~$d_1 \precdot d_2$, then there is no intermediate subgroup between~$\mathcal{F}(w(d_1))$ and~$\mathcal{F}(w(d_2))$.
Therefore, our~$\chi$ values are generated in increasing multiplicative order moving down the call graph induced by~$T$, and we can stop the procedure when we have reached the desired bound.

We could represent elements of the group~$\mathcal{F}_{p \pm 1}(w(d))$ as integers modulo~$d$.
However, it will be more convenient to represent them instead as integer arrays based on the unique decomposition of these groups into a direct sum of prime power cyclic groups. 
The multiplicative group operations can then be performed by the CPU using only componentwise addition and subtraction, which are much faster than direct multiplication and reciprocal taking.
One drawback is that addition and subtraction are not easily represented, so a conversion back to~$\mathbb{Z} / p\mathbb{Z}$ is necessary before any ring operations.
We now describe this representation.

Let~$p - 1 = p_1^{t_1} \cdots p_n^{t_n}$ and~$p + 1 = q_1^{s_1} \cdots q_m^{s_m}$, and assume $p_1 = q_1 = 2$.

Fix, once and for all, a~$\mathbb{Z}$-basis of~$\mathbb{F}_p^\times$ of the form~$\{g_i\}_{i=1}^n$, where~$|g_i| = p_i^{t_i}$.
Making this choice is a special case of~\cite[Algorithm 6.1]{generators}, which is our preferred method.
This choice induces a group isomorphism
\[ \iota_{p - 1} : \mathbb{F}_p^\times \xrightarrow{\sim} \bigoplus_{i = 1}^n \mathbb{Z} / p_i^{t_i} \mathbb{Z} \]
given by
\begin{align}
    \prod_{i=1}^n g_i^{r_i} \mapsto (r_1,\ldots,r_n). \label{eq:generators}
\end{align}

An identical construction exists for the elliptic coordinates by replacing~$\mathbb{F}_p^\times$ with the subgroup~$E \subset \mathbb{F}_{p^2}^\times$ with order $p + 1$.
Taken together, we have the commutative diagram
\begin{center}
    \begin{tikzcd}
        & \mathbb{Z}/p\mathbb{Z}
        & \\
          \mathbb{F}_p^\times \arrow[ur, "\psi_{p - 1}" above] \arrow[r, "\iota_{p - 1}" below, hook]
        & \left(\bigoplus_{i = 1}^n \mathbb{Z} / p_i^{t_i} \mathbb{Z}\right) \amalg \left(\bigoplus_{i = 1}^m \mathbb{Z} / q_i^{s_i} \mathbb{Z}\right) \arrow[u, dashed]
        & E \arrow[ul, "\psi_{p + 1}" above] \arrow[l, "\iota_{p + 1}" below, hook']
        \end{tikzcd}
\end{center}
where~$\psi_{p \pm 1}(\chi) = \chi + \chi^{-1}$.
The subgroups of~$\mathbb{F}_p^\times$ and~$E$ correspond to the nodes of~$T_{p - 1}$ and~$T_{p + 1}$, respectively.
For~$d = p_1^{d_1} \cdots p_n^{d_n}$, we associate to the node~$w(d)$ the subgroup
\[ \mathcal{F}_{p - 1}(w(d)) = \mathbb{Z} / d\mathbb{Z} \cong \bigoplus_{i = 1}^n \mathbb{Z} / p_i^{d_i}\mathbb{Z} \]
containing integer arrays given by~\eqref{eq:generators}, and similarly for the divisors of~$p + 1$.

\begin{remark}\label{rmk:two-to-one}
    Since~$\psi_{p \pm 1}(\chi) = \psi_{p \pm 1}(\chi^{-1})$, the maps~$\psi_{p - 1}$ and~$\psi_{p + 1}$ are 2-to-1 when restricted to~$\mathbb{F}_p^\times \setminus \{\pm 1\}$ and~$E \setminus \{\pm 1\}$, respectively.
    Since~$\psi_{p - 1}(\pm 1) = \psi_{p + 1}(\pm 1)$, the induced (dotted) map in the commutative diagram is~2-to-1.
\end{remark}

We now describe the recursive procedure for generating a value~$a \in \mathcal{S}$ (see Definition~\ref{def:small-coord}) with order~$\ord_p(a) = d$.
We begin with the integer array~$(0, 0, \ldots, 0)$ associated to~$w(1)$, and propagate this array down the factor trie recursively, with each node~$w(d)$ yielding\footnotemark\ all the coordinates~$a \in \mathbb{Z} / p\mathbb{Z}$ with~$\ord_p(a) = d$, mapping the integer array to~$a$ via the inverse map of~\eqref{eq:generators}.
\footnotetext{A \texttt{yield} statement is similar to a \texttt{return} statement in that it passes data back to the caller of the function; it differs in that execution continues in the callee, allowing for more than one value to be yielded.}

\begin{remark}
    Parabolic triples are connected to~$\mathcal{C}_p$ by definition, so we do not want to generate~$\chi$ values for which~$|\chi| = 1$ or~$2$ (see Remark~\ref{rmk:parabolic}).
\end{remark}

In light of Remark~\ref{rmk:two-to-one}, we would like to yield only one representative from each~$\{\chi, \chi^{-1}\}$ pair.
The following definition and lemma provides a characterization for canonizing this choice.

\begin{definition}\label{def:lower-half}
    Let~$p \pm 1 = p_1^{t_1} \cdots p_n^{t_n}$ with $p_1 = 2$, and let~$(r_1, \ldots, r_n) \in \oplus_{i=1}^n \mathbb{Z}/p_i^{t_i}\mathbb{Z}$ be an integer array according to~\eqref{eq:generators} not representing a~$\chi \in \mathbb{F}_{p^2}^\times$ with order~$1$ or~$2$.
    Let
    \[ k = \min\{j : r_j \neq 0 \text{\ and } (p_j, r_j) \neq (2, 2^{t_1 - 1})\}. \]
    The integer array is \textit{in the lower half} if~$r_k \leq p_k^{t_k} / 2$.
\end{definition}

\begin{lemma}\label{lem:one-per-half}
    Let~$\chi \in \mathbb{F}_{p^2}^\times$ such that~$|\chi| \not\in \{1, 2\}$ and~$|\chi| \in \mathcal{D}(p \pm 1)$.
    Then, exactly one of~$\iota_{p \pm 1}(\chi)$ or~$\iota_{p \pm 1}(\chi^{-1})$ is in the lower half.
\end{lemma}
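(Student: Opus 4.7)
The plan is to exploit that $\iota_{p \pm 1}$ is a group homomorphism, so $\iota_{p \pm 1}(\chi^{-1})$ is the entrywise negation of $\iota_{p \pm 1}(\chi)$ in the decomposition $\bigoplus_i \mathbb{Z}/p_i^{t_i}\mathbb{Z}$. Writing $\iota_{p \pm 1}(\chi) = (r_1, \ldots, r_n)$, the lemma reduces to the purely combinatorial claim that exactly one of $(r_1, \ldots, r_n)$ and $(-r_1, \ldots, -r_n)$ is in the lower half according to Definition~\ref{def:lower-half}.

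First I would confirm that the index $k$ actually exists under the hypothesis $|\chi| \notin \{1, 2\}$. If no such $k$ existed, every $r_j$ would be either $0$ or, for $j = 1$, the value $2^{t_1 - 1}$, and the resulting element of $\bigoplus_i \mathbb{Z}/p_i^{t_i}\mathbb{Z}$ would have order $1$ or $2$. Next I would verify that the two arrays $(r_1, \ldots, r_n)$ and $(-r_1, \ldots, -r_n)$ produce the same $k$. Both clauses defining $k$ are invariant under coordinatewise negation: $r_j = 0$ iff $-r_j = 0$, and modulo $2^{t_1}$ the value $2^{t_1 - 1}$ is its own negative. Hence the two arrays agree on every entry at indices $j < k$ and select the same $k$.

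The remaining step is to compare the $k$th entry $r_k$ with its negation $p_k^{t_k} - r_k$. By construction $r_k \neq 0$, and equality $r_k = p_k^{t_k} - r_k$ would force $2r_k \equiv 0 \pmod{p_k^{t_k}}$, hence $p_k = 2$ (so $k = 1$, since the primes are listed in increasing order) and $r_1 = 2^{t_1 - 1}$ — precisely the case excluded by the second clause of the definition of $k$. For odd $p_k$ the value $p_k^{t_k}/2$ is not even an integer, so $r_k \neq p_k^{t_k}/2$ is automatic. Therefore exactly one of $r_k$ and $p_k^{t_k} - r_k$ is strictly less than $p_k^{t_k}/2$, so exactly one of the two arrays satisfies the defining inequality for the lower half.

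The only subtlety — and the reason the definition of $k$ goes out of its way to skip the value $r_1 = 2^{t_1 - 1}$ — is that this value is fixed by negation modulo $2^{t_1}$, obstructing the naive use of the first nonzero coordinate as a sign witness. Once this is anticipated and folded into the definition of $k$, the argument is a routine assembly of the steps above and no further obstacle remains.
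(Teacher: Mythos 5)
Your proof is correct and follows the same strategy as the paper's: interpret $\iota_{p\pm1}(\chi^{-1})$ as coordinatewise negation, identify the distinguished index $k$, and observe that $r_k$ and $p_k^{t_k}-r_k$ cannot both satisfy (or both violate) the defining inequality since $r_k = p_k^{t_k}/2$ is excluded by the definition of $k$. You are somewhat more careful than the paper's terse argument in explicitly verifying that $k$ exists under the hypothesis and that both arrays yield the same $k$, but the underlying idea is identical.
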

\begin{proof}
    Consider a hyperbolic~$a \in \mathbb{Z} / p\mathbb{Z}$ with~$a = \chi + \chi^{-1}$ and~$\chi \in \mathbb{F}_p^\times$.
    (The elliptic case is handled identically.)
    Let~$\chi = g_1^{r_1} \cdots g_n^{r_n}$.
    Since~$\iota_{p - 1}(\chi^{-1}) = (a_1, \ldots, a_n)$ where
    \[ a_i = (p_i^{t_i} - r_i) \bmod p_i^{t_i}, \]
    if~$|\chi| \not\in \{1, 2\}$, then at least one of~$r_k$ and~$p_k^{t_k} - r_k$ is less than or equal to~$p_k^{t_k} / 2$.
    Therefore, at least one of~$\iota_{p - 1}(\chi)$ and~$\iota_{p - 1}(\chi^{-1})$ is in the lower half.

    Now assume that both~$\chi$ and~$\chi^{-1}$ are in the lower half.
    For each~$j$ such that~$p_j \neq 2$, it must be that~$r_j = 0$.
    But then our array is either~$(0,\ldots,0)$ or~$(2^{t_1 - 1},\ldots,0)$, rearranging if necessary so~$p_1 = 2$, and these arrays correspond to~$|\chi| = 1$ and~$|\chi| = 2$, respectively.
\end{proof}

We will only yield integer arrays in the lower half, thus generating each desired element of~$\mathbb{Z} / p\mathbb{Z}$ exactly once.
We do this by setting upper bounds on the entries in the arrays yielded by node~$w(d)$ for~$d = p_1^{d_1} \cdots p_m^{d_m}$ according to
\begin{align}\label{eq:upper-bounds}
    r_j \leq \ell_{d,j} =
    \begin{cases}
        p_j^{t_j} / 2 & j = \min\{j : p_j^{d_j} \neq 2\}, \\
        p_j^{t_j} & \text{otherwise}.
    \end{cases}
\end{align}

We now present our algorithm for yielding Markoff coordinates~$a \in \mathcal{S}$ in Algorithm~\ref{alg:propagate}.
Each value of~$\chi$ yielded corresponds to a Markoff coordinate~$\chi + \chi^{-1}$.

\begin{algorithm}[h]
    \SetAlgoLined%
    \SetKwFunction{Propagate}{propagate}
    \Input{%
        Factor trie $T_{p \pm 1}$ with $p \pm 1 = p_1^{t_1} \cdots p_n^{t_n}$, \\
        Limits $\ell_1, \ldots, \ell_n$ as in~\eqref{eq:upper-bounds}, \\
        Node $w(d)$ with $d = p_1^{d_1} \cdots p_m^{d_m}$, \\
        Array of integers $r_1,\ldots,r_m$. \\
    }
    \Base{%
        \Propagate{$w(1)$, $0,\ldots,0$}
    }
    \KwResult{Stream of $\chi$ values.}

    \Fn(){\Propagate{$w(d)$, $r_1, \ldots, r_m$}}{%
        \For{$i$ \In~$0$ \KwTo\ $p_m$}{%
            $r'_m \leftarrow r_m + p_m^{t_m - d_m}$\;
            \If{$r'_m > \ell_{d,m}$}{%
                \Break\;
            }
            \If{$d_m < t_m$}{%
                \Propagate{$w(p_1^{d_1} \cdots p_m^{d_m + 1})$, $r_1,\ldots,r'_m$}\;
            }
            \If{$i = 0$}{%
                \tcc{Yielding or propagating when $i = 0$ would not respect the trie structure, i.e., there would be multiple call paths to yield the same value.}
                \Continue\;
            }
            \If{$2 < d < \min(L_p, B_{(p,\pm)})$}{%
                \Yield $g_1^{r_1} \cdots g_{m - 1}^{r_{m - 1}} \cdot g_m^{r'_m}$\;
            }
            \For{$j$ \In $m + 1$ \KwTo $n$}{%
                \If{$d_j < t_j$ \And $w(d \cdot p_j)$ or there is a $d' \succ d$ such that $d' < \min(L_p, B_{(p,\pm)})$}{%
                    \Propagate{$w(d \cdot p_j)$, $r_1,\ldots,r'_m$}\;
                }
            }
        }
    }
    \caption{%
        Generating $\chi$ corresponding to desired Markoff coordinates $\chi + \chi^{-1} \in \mathcal{S}$.%
        \label{alg:propagate}
    }
\end{algorithm}%

Besides generating all~$a \in \mathcal{S}$, we can use the factor trie to generate cosets of~$\langle \chi \rangle$ for $\chi \in \mathbb{F}_{p^2}^\times$, as in Strategy~\ref{str:cosets}.
For this we need one representative for each coset of~$\langle\chi\rangle$.
That is, for~$|\chi| \in \mathcal{D}(p \pm 1)$, we need one value of order~$d$ for each divisor~$d$ of~$(p \pm 1) / |\chi|$.
Let~$|\chi| = p_1^{s_1}\cdots p_n^{s_n}$.
We can repurpose Algorithm~\ref{alg:propagate} by instead interpreting the yielded values as~$r$ in~\eqref{eq:markoff-reparam}, and replacing the upper bounds in~\eqref{eq:upper-bounds} with
\begin{align}\label{eq:quotient-limit}
    \ell_{d,j} =
    \begin{cases}
        0 & d_j \leq s_j, \\
        p_j^{t_j} / 2 & d_j > s_j \text{ and } j = \min\{j : p_j^{d_j} \neq 2\}, \\
        p_j^{t_j} & \text{otherwise}.
    \end{cases}
\end{align}

\section{Our Algorithm \& Results}

In this section, we combine the process described in Section 2 with the machinery constructed in Section 3 to produce a comprehensive algorithm for proving that~$\mathcal{G}^\times_p$ is connected.
The code described in this section can be found at \url{https://github.com/colbyaustinbrown/libbgs}, with executable code found in the \texttt{examples} directory.
We used the Rust programming language, version~1.74, for our \texttt{libbgs} library defining the data structures described in Section 3.
Rust is a strongly, statically typed imperative language prioritizing speed, memory safety, and concurrency.
We now present our algorithm for testing whether $\mathcal{G}_p^\times$ is connected.

\FloatBarrier
\begin{algorithm-thm}[Testing~$\mathcal{G}_p$ for connectivity]\label{alg:process}
    \mbox{}
    \begin{enumerate}[ref=\arabic*]
        \item\label{step:make-tries}
            Calculate the prime factorizations for~$p \pm 1$, and construct the factor tries~$T_{p \pm 1}$.
            Also, fix~$\mathbb{Z}$-bases of~$\mathbb{F}_p^\times$ and~$E$ for~\eqref{eq:generators}, and also a value~$m$ with order~$2^t || p^2 - 1$ (see Corollary~\ref{cor:elliptic-magic}) using~\cite[Algorithm 6.1]{generators}.
        \item\label{step:breakpoints}
            Use~\eqref{eq:endgame} and Algorithm~\ref{alg:middlegame} to determine the endgame and middle game breakpoints.
        \item\label{step:generate-as}
            Generate all coordinates with small order~$a \in \mathbb{F}_p$ according to Definition~\ref{def:small-coord} via Algorithm~\ref{alg:propagate}, and proceed to step~\ref{step:main-loop} for each~$a$.
        \item\label{step:main-loop}
            If~$(p \pm 1) / \ord_p(a) < |\mathcal{S} \times \mathcal{S}|$, go to step~\ref{step:cosets}; otherwise, go to step~\ref{step:small-orders}.
            \begin{enumerate}[ref=\arabic{enumi}(\alph*)]
                \item\label{step:small-orders}
                    Perform Strategy~\ref{str:cartesian}:
                    Use Algorithm~\ref{alg:propagate} to generate all~$b \in \mathbb{F}_p$ with small orders.
                    For each~$b$ value, calculate the~$0$, $1$, or $2$ values of~$c$ for which~$(a, b, c) \in \mathcal{G}_p$.
                    Any~$c$ value with small order gives a bad triple~$(a, b, c)$.
                \item\label{step:cosets}
                    Perform Strategy~\ref{str:cosets}:
                    Generate all~$r$ values given by Algorithm~\ref{alg:propagate} modified with~\eqref{eq:quotient-limit}, and use it to calculate all~$b$ values from~\eqref{eq:markoff-reparam}.
                    (If $a$ is elliptic, the values $r$ must be multiplied by the value $m$ chosen in Step~\ref{step:make-tries} before being plugged in to \eqref{eq:markoff-reparam}, see Corollary~\ref{cor:elliptic-magic}.)
                    If every~$b$ value has order smaller than~$L$, then every triple in the orbit is bad.
            \end{enumerate}
        \item\label{step:total}
            Count the number of bad triples found.
            If the result is less than~$4p$, then~$\mathcal{G}_p$ is connected.
            Otherwise, the algorithm is inconclusive.
    \end{enumerate}
\end{algorithm-thm}%

Using Algorithm~\ref{alg:process}, we confirmed that~$\mathcal{G}_p$ is connected for all~$p < 1,000,000$, confirming Theorem~\ref{thm:comp-results}.
We also ran our algorithm for a random sample of $1,000$ primes~$p < 110,000,000$, and~$\mathcal{G}_p^\times$ was connected for each of these primes, too.
We did not find any~$\mathcal{G}_p$ with at least~$p$ bad triples, with two exceptions: $p = 7,558,541$ and $p = 96,840,901$, for which the number of bad triples found was $9,716,411$ and $103,370,751$, respectively.
The criteria for connectivity of~\cite{chen} was nearly always sufficient in our tests, although we did rely on the improved bounds of~\cite{fuchs} that any $|\mathcal{G}_p^\times \setminus \mathcal{C}_p| \geq 4p$.
The number of bad triples per prime is shown in Figure~\ref{fig:bad-triples}.
Table~\ref{tbl:most-smalls} shows the prime factorizations of $p \pm 1$ and bad triple counts for a sample of primes less than one million.
Generally, the number of bad triples is inversely correlated with the number of prime divisors (with multiplicity) of $p \pm 1$.

\begin{figure}[tbph]
\begin{center}
    \input{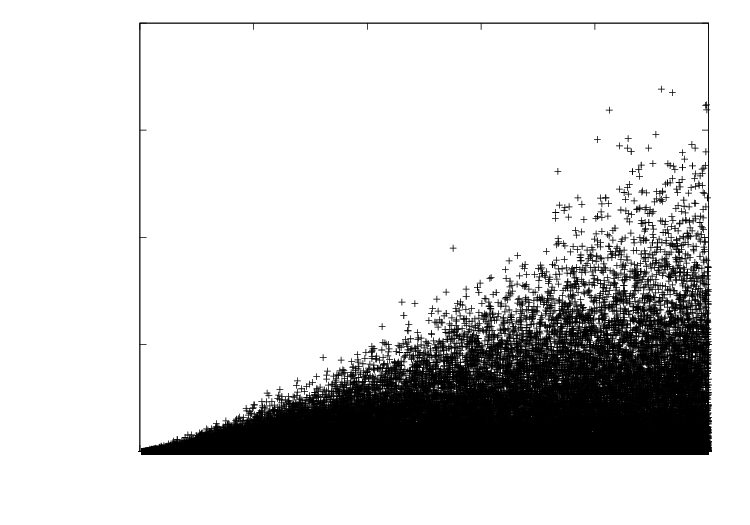}

    \input{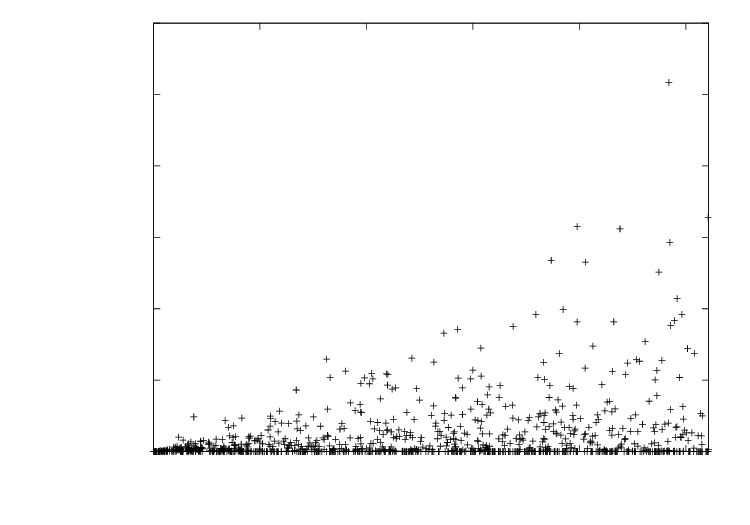}
    \caption{Total number of bad triples for all primes less than $1,000,000$ (top) and random primes less than $110,000,000$ (bottom).}
    \label{fig:bad-triples}
\end{center}
\end{figure}

\begin{table}[tph]
\begin{center}
    \caption{For $p < 1,000,000$, the $10$ primes with the largest ratio $|\mathcal{B}| / p$, and the~$5$ largest primes with~$\mathcal{B} = \emptyset$.}%
    \label{tbl:most-smalls}
    \begin{tabular}{ccccc}
        \toprule
        & \multicolumn{2}{c}{Hyperbolic} & \multicolumn{2}{c}{Elliptic} \\
        \cmidrule(lr){2-3}\cmidrule(lr){4-5}
        $p$ &~$p - 1$ & Bad Triples &~$p + 1$ & Bad Triples \\
        \midrule
        825,287	&~$2 \cdot 7 \cdot 11 \cdot 23 \cdot 233$ &	277,287 &~$2^3 \cdot 3 \cdot 137 \cdot 251$ &	320,209	    \\
        916,879	&~$2 \cdot 3 \cdot 17 \cdot 89 \cdot 101$ &	251,391	&~$2^4 \cdot 5 \cdot 73 \cdot 157$ &	425,410 \\
        804,203	&~$2 \cdot 7 \cdot 17 \cdot 31 \cdot 109$ &	295,979	&~$2^2 \cdot 3^2 \cdot 89 \cdot 251$ &	286,714	\\
        936,259	&~$2 \cdot 3 \cdot 17 \cdot 67 \cdot 137$ &	307,155	&~$2^2 \cdot 5 \cdot 13^2 \cdot 277$ &	362,722 \\
        734,803	&~$2 \cdot 3 \cdot 29 \cdot 41 \cdot 103$ &	171,268	&~$2^2 \cdot 7^2 \cdot 23 \cdot 163$ &	351,854	    \\
        550,811	&~$2 \cdot 5 \cdot 13 \cdot 19 \cdot 223$ &	211,593	&~$2^2 \cdot 3 \cdot 197 \cdot 233$ &	168,181 \\
        858,701	&~$2^2 \cdot 5^2 \cdot 31 \cdot 277$ &	279,547	&~$2 \cdot 3 \cdot 13 \cdot 101 \cdot 109$ & 304,704 \\
        843,229	&~$2^2 \cdot 3^2 \cdot 59 \cdot 397$ &	320,154	&~$2 \cdot 5 \cdot 37 \cdot 43 \cdot 53$ &	250,655	\\
        \midrule
        995,677 &~$2^2 \cdot 3 \cdot 11 \cdot 19 \cdot 397$ & 0 &~$2 \cdot 497,839$ & 0 \\
        995,987 &~$2 \cdot 497,993$ & 0 &~$2^2 \cdot 3 \cdot 7 \cdot 71 \cdot 167$ & 0 \\
        996,783 &~$2^2 \cdot 3 \cdot 5 \cdot 37 \cdot 449$ & 0 & $2 \cdot 498,391$ & 0 \\
        997,583 &~$2 \cdot 498,791$ & 0 &~$2^4 \cdot 3 \cdot 7 \cdot 7,969$ & 0 \\
        999,958 &~$2 \cdot 499,979$ & 0 &~$2^3 \cdot 3 \cdot 5 \cdot 13 \cdot 641$ & 0 \\
        \bottomrule
    \end{tabular}
\end{center}
\end{table}%

In the worst case, steps~\ref{step:make-tries} and~\ref{step:breakpoints} require on the order of~$|\mathcal{D}(p \pm 1)| + |\mathcal{D}(p + 1)|$ computations.
Step~\ref{step:main-loop} executes at most~$p$ times, and each iteration is bounded above by the runtime of step~\ref{step:cosets}, which checks at most~$\mathcal{D}(p \pm 1)$ orbits.
The check on each orbit terminates after finding any triple not in $\mathcal{B}$; the largest number of such checks per prime is shown in Figure~\ref{fig:coset-checks}.
We capped the number of checks at 60 to ensure the runtime stays on the order of~$|\mathcal{D}(\pm 1)|$.
Step~\ref{step:cosets} was terminated after~60 checks for only one prime, $p = 119,659$, and the algorithm still affirmitely determined $\mathcal{G}_p^\times$ is connected.
Notably, the number of checks required does not have any discernable growth over the much larger range $p < 110,000,000$.
We use the following result.
\begin{theorem}[{\cite[theorem 315]{theory-of-numbers}}]
   For every~$\epsilon > 0$, the divisor function~$\mathcal{D}(n) \in o(n^\epsilon)$.
\end{theorem}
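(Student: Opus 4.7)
The plan is to exploit the multiplicativity of the divisor function. Writing $n = p_1^{a_1} \cdots p_k^{a_k}$ for the prime factorization of $n$, we have $|\mathcal{D}(n)| = \prod_{i=1}^k (a_i + 1)$, so
\[ \frac{|\mathcal{D}(n)|}{n^\epsilon} = \prod_{i=1}^k \frac{a_i + 1}{p_i^{\epsilon a_i}}. \]
The strategy is to first show this product is bounded above by a constant depending only on $\epsilon$, giving $|\mathcal{D}(n)| = O(n^\epsilon)$, and then upgrade this to $o(n^\epsilon)$ by a halving trick.

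For the boundedness, partition the primes at the threshold $2^{1/\epsilon}$. For every $p_i > 2^{1/\epsilon}$ we have $p_i^\epsilon > 2$, so $p_i^{\epsilon a_i} \geq 2^{a_i} \geq a_i + 1$, and the corresponding factor in the product is at most $1$. For each prime $p \leq 2^{1/\epsilon}$, the function $a \mapsto (a + 1) / p^{\epsilon a}$ tends to $0$ as $a \to \infty$ (polynomial over exponential), hence attains a finite maximum $C_p$ on $a \geq 0$. Since there are only finitely many primes below $2^{1/\epsilon}$, the product $C_\epsilon = \prod_{p \leq 2^{1/\epsilon}} C_p$ is a finite constant, and we conclude $|\mathcal{D}(n)| \leq C_\epsilon n^\epsilon$ for all $n$.

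To upgrade from $O(n^\epsilon)$ to $o(n^\epsilon)$, apply the estimate above with $\epsilon/2$ in place of $\epsilon$: there is a constant $C_{\epsilon/2}$ with $|\mathcal{D}(n)| \leq C_{\epsilon/2} n^{\epsilon/2}$, hence $|\mathcal{D}(n)|/n^\epsilon \leq C_{\epsilon/2}\, n^{-\epsilon/2} \to 0$ as $n \to \infty$, which is exactly the desired $o(n^\epsilon)$ bound. The argument is entirely elementary, with no substantive obstacle; the only subtlety is remembering to convert $O$ to $o$ via the halving step, rather than attempting to extract $o(n^\epsilon)$ directly from the per-prime analysis.
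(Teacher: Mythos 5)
The paper does not prove this statement; it cites it directly to Hardy and Wright (Theorem 315), so there is no in-paper argument to compare against. Your proof is correct and is essentially the classical Hardy--Wright argument: write $|\mathcal{D}(n)| = \prod_i (a_i + 1)$ by multiplicativity, split primes at the threshold $2^{1/\epsilon}$ so that large primes contribute a factor at most $1$ (using $p^{\epsilon a} > 2^a \ge a+1$), bound the finitely many small-prime factors by a constant, and then halve $\epsilon$ to upgrade the resulting $O(n^\epsilon)$ to $o(n^\epsilon)$. All steps check out; the only cosmetic point is that the paper abuses notation and writes $\mathcal{D}(n)$ for both the set of divisors and its cardinality, which you correctly disambiguate with $|\mathcal{D}(n)|$.
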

Therefore, our algorithm runs in~$o(p^{1 + \epsilon})$ time, in the worst case.
The runtimes for all the primes we tested are shown in Figure~\ref{fig:runtimes}.
As emphasized in the figures, the almost linear bound on the runtime is a worst-case upper bound, and our algorithm performs well below this upper limit for a large number of primes.

\begin{figure}[tbph]
\begin{center}
    \input{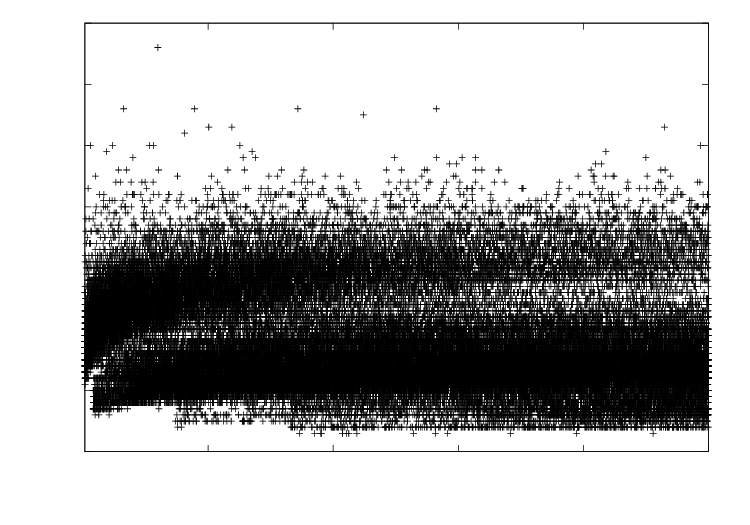}

    \input{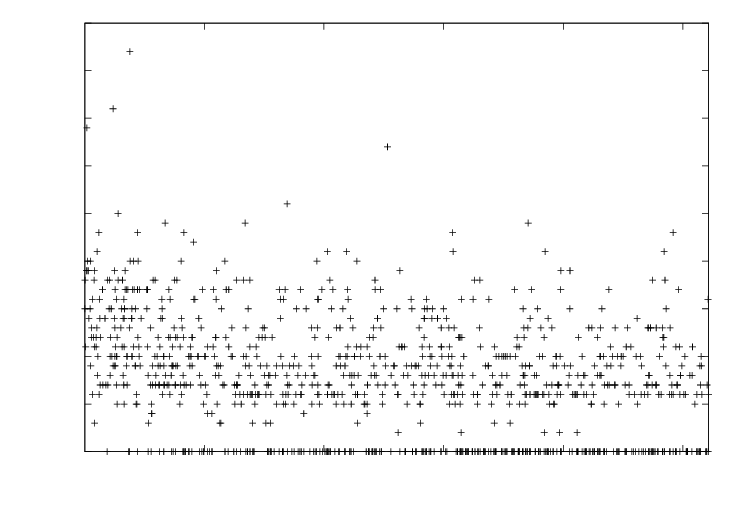}
    \caption{Maximum number of orbits checked during any iteration of Step~\ref{step:cosets} for all primes less than $1,000,000$ (top) and random primes less than $110,000,000$ (bottom).}%
    \label{fig:coset-checks}
\end{center}
\end{figure}

\begin{figure}[tbph]
\begin{center}
    \input{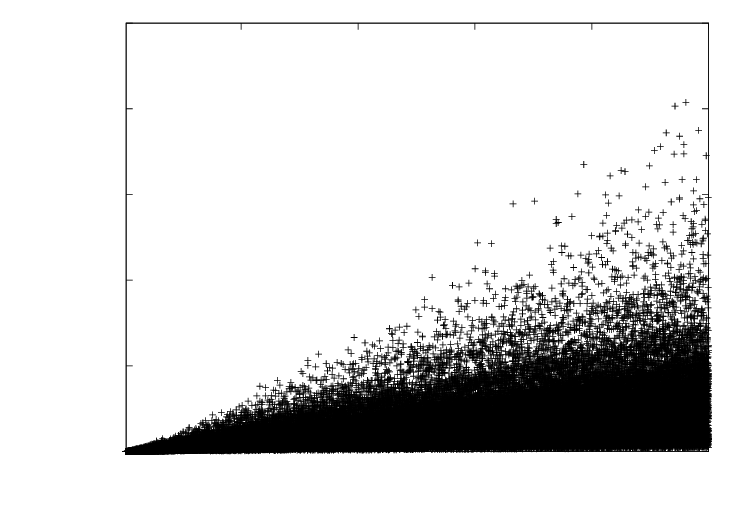}

    \input{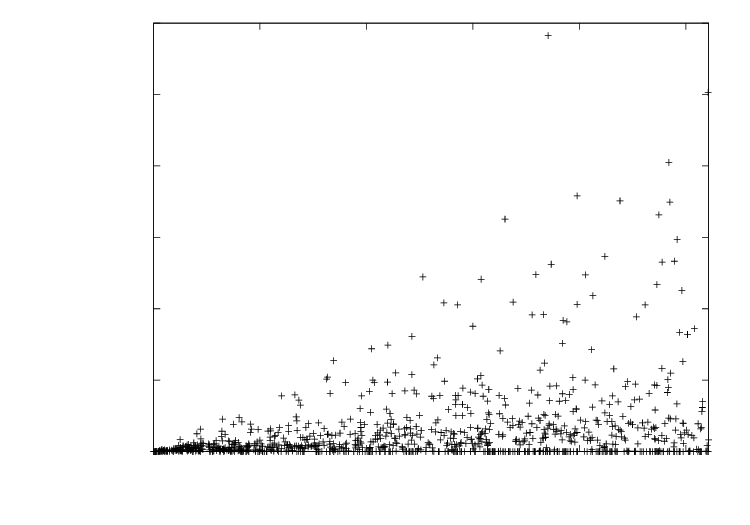}
    \caption{%
        Runtimes for Algorithm~\ref{alg:process} on all primes less than $1,000,000$ (top) and our random sample of primes less than $110,000,000$ (bottom). 
        Measured on an 11th Gen Intel Core i5-11320H processor.
    }%
    \label{fig:runtimes}
\end{center}
\end{figure}%

The actual time to run Algorithm~\ref{alg:process} depends heavily on both the endgame and middle game breakpoints.
A plot of a the breakpoints for our random primes beneath $110$ million, calculated using equation~\eqref{eq:endgame} and Algorithm~\ref{alg:middlegame}, is shown in Figure~\ref{fig:breakpoints}.
The first prime for which Algorithm~\ref{alg:middlegame} returns a middle game breakpoint is~$p = 1,328,247$.

\begin{figure}[tbph]
\begin{center}
    \input{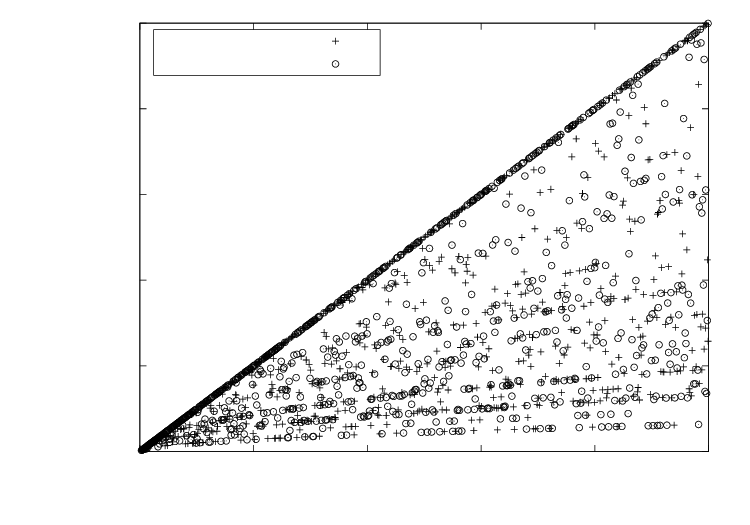}

    \input{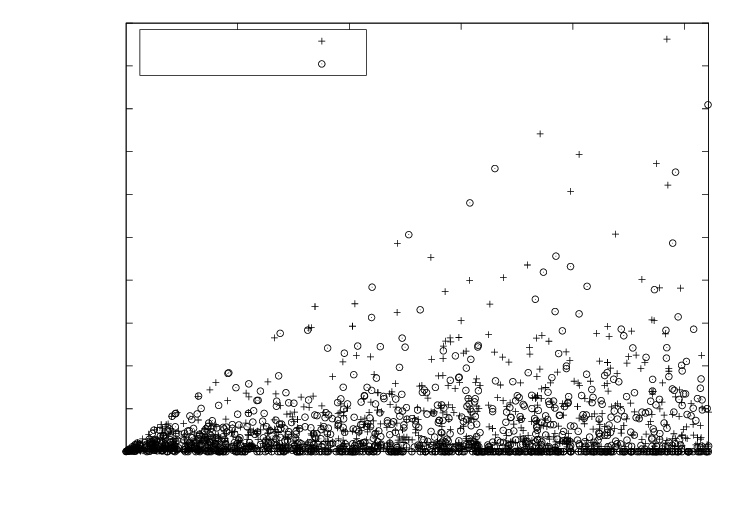}
    \caption{Breakpoints for all primes less than $1,000,000$ (top) and random primes less than $110,000,000$ (bottom).}%
    \label{fig:breakpoints}
\end{center}
\end{figure}%

\section*{Appendix: Notes on constant parameterization in Rust}
In this section, we discuss the choice to represent our functions and data structures as being constantly (i.e., determined at compile time) parameterized by a prime $P$.
This choice had major impacts on both the speed of compiling and running our algorithm described in Section 4.
While a similar approach is possible in other languages, including C using preprocessor macros, we found this technique particularly straightforward to implement in Rust.

Rust types, traits, and functions can be parameterized by types and constants.
When Rust code is compiled, it is monomorphized: every type, trait, and function with a parameterization possibly has a copy made per realization.
For an introduction to monomorphization in Rust, see~\cite{rust-language}.

In this project, we choose to parameterize almost all of our types by a prime \texttt{P}.
The compiler then applies numerous optimizations which can have dramatic effects on the runtimes.
As a demonstration, we compare two implementations of a simple program in Listing~\ref{lst:const-rust}.

\begin{lstlisting}[
      language=Rust
    , caption={Rust program showing the same function with and without constant parameterization.}
    , label={lst:const-rust}
    , frame=single
    , float=tph
]
fn foo(x: u64, p: u64) -> u64 {
    (x * 1_000_007) % p
}

fn bar<const P: u64>(x: u64) -> u64 {
    (x * 1_000_007) % P
}

fn main() {
    for i in 0..10 {
        foo(i, 7);
        bar::<7>(i);
    }
}
\end{lstlisting}%

In the unparametrized \texttt{foo}, the compiler translates the modulus operator into a \texttt{divl} assembler command.
When constantly parameterized, Rust compiles the \texttt{bar} function by optimizing away the division, replacing the modulus by~$7$ with a series of additions, subtractions, and shifts, along with a so-called ``magic number'' dependent on the modulus operand.
Most processors have single-cycle addition, subtraction, and shift operations, but division can be as much as twenty times slower.
The \texttt{bar} function is therefore significantly faster than \texttt{foo}, despite being compiled into more lines of assembler.
For an introduction to this technique, see~\cite[Chapter~10]{hackers-delight}.

However, this technique comes at a steep cost during compile times.
The compiler must generate bytecode for each of the realized monomorphized functions.
Additionally, because the compiler must statically know all type parameters and solve all trait constraints, the easiest way to iterate over many primes is via macro expansion, which happens very early on in the compilation process, leading to larger volumes of code for which to generate bytecode.

The Rust language team neither maintains an official list of compile time optimizations nor includes optimizations as part of their stability guarantees.
With that said, some standard optimizations in most modern compilers, including the Rust compiler version 1.74, include compile-time constant promotion and evaluation, branch elimination, and precomputed stack frame sizes.

To demonstrate the trade-offs of this method, we compiled and ran a demonstration using both methods.
We count the number of primes~$p$ in various ranges for which~$42$ is a quadratic nonresidue modulo~$p$.
Listings~\ref{lst:bench-no-const} shows this code when the prime is accepted as a function parameter, and Listings~\ref{lst:bench-with-const} shows the code with constant parameterization.
The \texttt{get\_primes()} method and \texttt{get\_primes!()} macro, respectively, return the primes within the given ranges.
Table~\ref{tbl:benchmarks} shows the results of our benchmarks, run on an 11th Gen Intel Core i5-11320H processor. 

\begin{table}[h]
\begin{center}
    \caption{Computation times for Listings~\ref{lst:bench-no-const}~and~\ref{lst:bench-with-const}.}%
    \label{tbl:benchmarks}
    \begin{tabular}{@{} cp{2cm}p{2cm}p{2cm}p{2cm} @{}}
        \toprule
        & \multicolumn{4}{c}{Compile times (miliseconds)} \\
        \cmidrule(lr){2-5}
        & \multicolumn{2}{c}{Without constants} & \multicolumn{2}{c}{With constants} \\
        \cmidrule(lr){2-3}\cmidrule(lr){4-5}
       $p$ domain &~$\mu$ &~$\sigma$ &~$\mu$ &~$\sigma$ \\
       $1,000,000$ to~$1,020,000$
            &~$251.3$
            &~$4.9$
            &~$475.3$
            &~$27.7$ \\
       $1,000,000$ to~$1,200,000$
            &~$252.2$
            &~$4.9$
            &~$41,037.3$
            &~$973.9$ \\
       $10,000,000$ to~$10,020,100$
            &~$250.4$
            &~$3.6$
            &~$9,146.0$
            &~$63.0$ \\
        & \multicolumn{4}{c}{Run times (miliseconds)} \\
        \cmidrule(lr){2-5}
        & \multicolumn{2}{c}{Without constants} & \multicolumn{2}{c}{With constants} \\
        \cmidrule(lr){2-3}\cmidrule(lr){4-5}
       $p$ domain &~$\mu$ &~$\sigma$ &~$\mu$ &~$\sigma$ \\
       $1,000,000$ to~$1,020,000$
            &~$42.0$
            &~$0.2$
            &~$1.0$
            &~$0.1$ \\
       $1,000,000$ to~$1,200,000$
            &~$51.9$
            &~$0.1$
            &~$1.0$
            &~$0.1$ \\
       $10,000,000$ to~$10,020,000$
            &~$520.3$
            &~$1.1$
            &~$1.0$
            &~$0.1$ \\
        \bottomrule
    \end{tabular}
\end{center}
\end{table}%

In our experience, the widespread use of constant parameterization allowed us to run our program on significantly larger primes than we otherwise reasonably could have, but was also a significant slowdown in the exhaustive search of all primes less than one million.
Further experimentation with these techniques should be a consideration in future projects utilizing the Rust programming language.

\begin{lstlisting}[
      language=Rust
    , caption={Benchmark program without constant parameterization.}
    , label={lst:bench-no-const}
    , frame=single
    , float=h
]
fn is_nonresidue(mut x: u64, p: u64) -> bool {
    let mut n = (p - 1) / 2;
    let mut y = 1;
    while n > 1 {
        if n % 2 == 1 {
            y = (y * x) % p;
        }
        x = (x * x) % p;
        n /= 2;
    }
    (x * y) % p == p - 1
}

fn main() {
    let mut total = 0;
    for p in get_primes() {
        if is_nonresidue(42, p) { total += 1; }
    }
    println!("{total}");
}
\end{lstlisting}%

\begin{lstlisting}[
      language=Rust
    , caption={Benchmark program with constant parameterization.}
    , label={lst:bench-with-const}
    , frame=single
    , float=h
]
fn is_nonresidue<const P: u64>(mut x: u64) -> u64 {
    let mut n = (P - 1) / 2;
    let mut y = 1;
    while n > 1 {
        if n % 2 == 1 {
            y = (y * x) % P;
        }
        x = (x * x) % P;
        n /= 2;
    }
    (x * y) % P == P - 1
}

fn main() {
    let mut total = 0;
    // Expands into an is_nonresidue call for each prime from the get_primes! macro.
    macro_rules! go {
        ($($P:literal),$(,)?) => {$(
            if is_nonresidue::<$P>(42) { total += 1; }
        )+};
    }
    get_primes!(go);
    println!("{total}");
}
\end{lstlisting}%

\FloatBarrier
\printbibliography

\end{document}